\newcommand{\HOX}[1]{\marginpar{\footnotesize #1}}
\newtheorem{theorem}{Theorem}[section]
\newtheorem{lemma}[theorem]{Lemma}
\newtheorem{remark}[theorem]{Remark}
\newcommand{\eps}{\varepsilon}
\renewcommand{\phi}{\varphi}
\renewcommand{\tilde}{\widetilde}
\newcommand{\R}{\mathbb R}
\newcommand{\C}{\mathbb C}
\renewcommand{\S}{\mathbb{S}}
\newcommand{\tl}{\tilde}
\newcommand{\curl}{\nabla\times}
\newcommand{\grad}{\nabla}
\newcommand{\cout}[1]{}
\newcommand{\e}{\varepsilon}
\def \bfo {\begin {eqnarray*} }
\def \efo {\end {eqnarray*} }
\def \ba {\begin {eqnarray*} }
\def \ea {\end {eqnarray*} }
\def \beq {\begin {eqnarray}}
\def \eeq {\end {eqnarray}}
\def \p {\partial}
\title[3-dimensional invisibility cloaking]{The blow-up of electromagnetic fields in 3-dimensional invisibility cloaking for Maxwell's equations}
\begin{document}

\author{Matti Lassas}
\address[Matti Lassas]{Department of Mathematics and Statistics, University of Helsinki}

\author{Ting Zhou}
\address[Ting Zhou]{Department of Mathematics, Northeastern University.}

\maketitle

\noindent {\bf Abstract:} {\it
 Transformation optics constructions have allowed the design of cloaking devices that steer electromagnetic, acoustic and quantum waves around a region without penetrating it, so that this region is hidden from external observations. The {proposed} material parameters 
 are anisotropic, and singular at the interface between the cloaked region and the cloaking device.
The presence of these singularities causes various mathematical problems and physical effects on the interface surface. In this paper, we analyze the 3-dimensional cloaking for Maxwell's equations
 when there are sources or sinks present inside the cloaked region. In particular, we consider nonsingular approximate invisibility cloaks based on the truncation of the singular transformations. 
 We analyze the limit
 of solutions when the approximate cloaking approaches the ideal cloaking
 in the sense of distributions.
  We show that the solutions in the approximate cloaks converge to a distribution that contains Dirac's delta
distribution supported {on the interface surface}. 
In particular, this implies that 
the limit of solutions are not measurable functions, 
{making them} outside of those classes of functions that have earlier
been used in  the models of the ideal invisibility cloaks. Also, we give a rigorous meaning for the ``extraordinary surface voltage effect"
considered in physical literature of invisibility cloaks. 
}
\medskip

\noindent {\bf 
Key words:}  Invisibility cloaking, Maxwell's equations, transformation optics.
\section{Introduction}

The transformation optics-based cloaking design, since proposed in 2006 in \cite{Leo,PenSchSmi}, has attracted most attention among many theoretical proposals for realizing invisibility, with widely reported experiments and data accumulated due to the development of metamaterials. The basic idea is that, the coordinate transformation invariance for certain systems, such as those describing electromagnetic/optic or acoustic wave propagations, makes modifying the background medium of a local region $\Omega$ in a certain way undetectable when observe far away.  
In particular, when {\em singular} spatial transformations are used, like the one that blows up a point into a small bounded domain $D$, this undetectable customization of medium (known as the push-forward by the singular transformation) can have the structure of a fixed layer $\Omega\backslash\overline D$ (the cloaking device) surrounding ``arbitrary" media in $D$ (the cloaked region). Namely, whatever in $D$ along with the cloaking device is invisible! Such singular coordinate transformations to create a ``hidden pocket'' surrounded by
a layer of degenerate material,
were proposed in 2003 in
 \cite{GLU} for electrostatics as a counter-example of uniqueness of an inverse problem, known as the Calder\'on's problem. In \cite{Leo, PenSchSmi} a customized cloaking layer was proposed to be created of metamaterials.
Such layer should be designed so that it bends the light rays or detour the electromagnetic waves away from the cloaked region and return them back, as if they penetrate the region straightly, making observation outside indistinguishable from that of the empty background space.

The main difficulty in analyzing this scheme {rigorously} lies on the prescribed {\em singular} (non-regular) medium in the cloaking layer,
{see discussion in \cite{ALP,GKLU3,GKLU5}}. To be more precise, we consider in this paper the electromagnetic cloaking, where the prescribed medium in the cloaking layer $\Omega\backslash\overline D$ has electric permittivity $\tl\varepsilon(x)$, magnetic permeability $\tl\mu(x)$. Since they are  the push-forward of the ambient medium parameters by a singular transformation (detailed in Section \ref{sec:sing}), {one of the eigenvalues
of $\tl\varepsilon(x)$ and $\tl\mu(x)$}
 degenerates to zero at the exterior of the cloaking interface $\partial D^+$. Suppose $D$ is filled with {an arbitrary medium $(\mu_1(x), \varepsilon_1(x))$} to be cloaked. 
Let us consider the waves propagating in  
domain $\Omega\subset \R^3$ consisting of medium whose
permittivity and permeability are given by
$(\tl\mu,\tl\varepsilon)$ in $\Omega\backslash\overline D$ (the cloaking layer\slash device)  and $(\mu_1,\varepsilon_1)$ in $\overline D$ (the cloaked region). 
{Such cloaking gives counterexamples for uniqueness of the inverse problems 
for Maxwell's equations.
As we want to consider also cloaking of active bodies, we add a current source $\tl J$ in the domain $D$.
To study this, we will consider the case when $\mu_1(x)=\mu_0$ and $\varepsilon_1(x)=\varepsilon_0$
are constants. We note that the results of this paper can with small modifications
be extended to the case where  $\mu_1(x)-\mu_0$  and $\varepsilon_1(x)-\varepsilon_0$ are compactly
supported functions in $D$.

Since $(\tl\mu,\tl\varepsilon)$ are singular near $\partial D$, the key mathematical question
is, in what sense the solutions to Maxwell's equations 
\[\textrm{curl}\, \tl E-i\omega \tl B=0,\qquad \textrm{curl}\, \tl H+i\omega \tl D=\tl J,\qquad \textrm{ on }\;\Omega\backslash\overline D \cup D,\]
exist. Here the constitutive relations are given by
\[\tl D=\left\{\begin{array}{ll}\tl\varepsilon\tl E & \mbox{ on }\;\Omega\backslash\overline D,\\ \varepsilon_0\tl E &\mbox{ on }\;D.\end{array}\right. \qquad \tl B=\left\{\begin{array}{ll}\tl\mu\tl H & \mbox{ on }\;\Omega\backslash\overline D,\\ \mu_0\tl H &\mbox{ on }\;D.\end{array}\right.\] 
Moreover, one would like to specify what kind of boundary conditions should appear at the interface $\partial D$, where the singularity happens. There have been many different proposals to
model the above Maxwell's equations with singular coefficients and different
boundary conditions have been proposed, depending on what spaces the solutions of the equations
are assumed to belong. In the present paper our aim is to consider approximative
cloaks and consider what is the limit of the solutions as it approaches
an ideal cloaking. Before formulating our results, let us review the earlier proposed models.\\




In \cite{GKLU}, 
the concept of {\em finite energy solution} (FES) was formulated. These
are distributional solutions to Maxwell's equations for which all physical
fields, {that is, the electric and magnetic fields $\tl E$ and $\tl H$ 
 as well as
the electromagnetic fluxes  $\tl D$ and $\tl B$  
are $\C^3$-valued measurable functions on $\Omega$. The ``tilde'' here refers to the fact
that these solutions satisfy Maxwell's equation with 
the cloaking material parameters $\tl\varepsilon$ and $\tl\mu.$}
Moreover, the energy norm with degenerate weight is required to be finite, that is 
\[\int_{\Omega\backslash\overline D} \big(\tl\varepsilon^{jk}\tl E_j\overline{\tl E_k}+\tl\mu^{jk}\tl H_j\overline{\tl H_k}\big)~dV+\int_{D}\big(\varepsilon_0^{jk}\tl E_j\overline{\tl E_k}+\mu_0^{jk}\tl H_j\overline{\tl H_k}\big)~dV<\infty,\]
where, as everywhere below, we have used the Einstein summating convention by summing over indecies $j$ and $k$ that appear both as sub- and superindecies.
 It is shown in \cite{GKLU} that a hidden boundary condition must be satisfied by the FES at the interior of the interface, namely
\[\nu\times \tl E|_{\partial D^-}=0,\qquad \nu\times\tl H|_{\partial D^-}=0.\]
These, physically known as PEC and PMC conditions, form an over-determined set of boundary conditions for Maxwell's equations in $D$, and could be only possibly satisfied for cloaking passive media {(i.e., $\tl J=0$ or $\tl J$ is a non-radiating source that would produce in the free space $\R^3$ a compactly supported electromagnetic field)}. But for generic $\tl J$, including ones arbitrarily close to $0$, there is no FES-solution.

In \cite{We1, We2}, solutions with various boundary conditions at $\partial D^\pm$ are considered in the context of self-adjoint
extensions of  
the Maxwell's {operator}, that are compatible with energy conservation. There, the fields $\tl E$ and $\tl H$
are assumed to be measurable functions that are in the
domain of the  closed quadratic form, as well as in the domain of the
corresponding self-adjoint operator. We call these the operator theoretic (OT) solutions.
{It is shown there that the Maxwell operator $A$ with degenerate coefficients
$\tl\varepsilon(x)$ and $\tl\mu(x)$ is essentially self-adjoint and its unique self-adjoint extension  has a domain
$\mathcal D(A)\subset L^2(\Omega)$ such that
$(E,H)\in \mathcal D(A)$ have one-sided traces (from the outside of $D$) satisfying}
\[\nu\times\tl E|_{\partial D^+}=0,\qquad \nu\times\tl H|_{\partial D^+}=0,\]
due to the degeneracy of parameters at $\partial D^+$. With regular medium to be cloaked in $D$, functions
{$(E,H)\in\mathcal D(A)$ have  also one-side traces, from the inside of $D$, satisfying}
\begin{equation}\label{eqn:Weder-2}\nu\cdot(\textrm{curl }\tl H)|_{\partial D^-}=\nu\cdot(\textrm{curl }\tl E)|_{\partial D^-}=0.\end{equation}
Notice that this is from the point of view that self-adjoint extensions in the domain $\Omega$ are direct sums of those in $\Omega\backslash\overline D$ and $D$, hence the solutions are considered completely decoupled from each other.\\

In \cite{GKLU_2,GKLU-jst,GKLU-pnas,KOVW, KSVW, LZ,Ngu,Ngu2,NgVo,NgVo2}, for the purpose of both physical realization and analysis of the waves behavior, the regularization of the singular scheme is studied as the fundamentals of approximate cloaking design. In this paper, we will consider the same 
{approximation}
scheme as in \cite{LZ} for electromagnetic waves. The detailed {non-singular approximation} will be introduced in Section \ref{sec:sing}. Roughly speaking, instead of using the transformation that blows up a point into the {cloaked region $D=B_1$ where $B_R\subset \R^3$  is the  ball
of radius $R$ centred at the origin,}
 in this idealized spherical geometric setting, we use a non-singular transformation that blows up a small ball $B_\rho$ with radius $\rho$ (the regularization parameter) into $B_1$. The transformation medium is then regular for fixed $\rho>0$ and appears as a small inhomogeneity in the empty space. The well-understood solutions for regular media provide a tool to scrutinize the extreme case, that is taking the limit $\rho\rightarrow0$.

{ As seen above, there are different, partly contradicting, alternatives for rigorous models of  ideal invisibility cloaks. As physical attempts to 
build up an invisibility cloak from metamaterial are always based on approximate constructions, we propose here a change of
the point of view for defining a model of an ideal invisibility cloak:
We consider a limit of the solutions in an approximate cloak as $\rho\rightarrow0$,  in the sense ofdistributions (i.e., generalized functions),
and describe the plausible solutions in an ideal cloak as
{these limits of the solutions}. 
We show that the solutions in the approximative cloaking structures} converge to a distribution that contains Dirac's delta
distribution supported on the interface $\partial D$. 
This implies that the set of measurable functions might be too small as the domain of the singular Maxwell's operator,
when considered as the limit of electromagnetic approximate cloaking. {We note that the ideal and 
the approximate cloaks we study here have no energy absorption, that is, the conductivity is zero.}


{
The results of the paper can be summarised as follows: 
Let $\tilde E_\rho$  and $\tilde H_\rho$  denote the electric and the magnetic fields in an approximative cloak in $\Omega$ as described in \eqref{eq: approx. cloak},
satisfying the 
boundary condition $\nu\times \tilde E_\rho|_{\p B_2}=f$. 
We consider the limit, in the sense of distributions,
$\tilde E_0=\displaystyle\lim_{\rho\to 0} \tilde E_\rho$ and $\tilde H_0=\displaystyle\lim_{\rho\to 0} \tilde H_\rho$ of the solutions 
of the Maxwell equations, that is, the limit when
the approximative cloak approaches a perfect cloak. Then

\begin{enumerate}

\item We show that the limit $(\tilde E_0,\tilde H_0)$ is not in the class of finite energy solutions.
This is compatible with the non-existence result of  finite energy solutions given in \cite{GKLU}.

\item We show that the limit $(\tilde E_0,\tilde H_0)$ is not a measureble function and thus it 
does not belong in the class of operator theoretic solutions studied in  \cite{We1}.
However, if $(\tilde E_0,\tilde H_0)$ is decomposed to a sum of a measurable function $(E_m,H_m)$
and a delta-distribution supported on the surface $\p D$, the  function  part $(E_m,H_m)$
satisfies the non-standard  boundary conditions  \eqref{eqn:Weder-2} 
on  $\p D$ described in \cite{We1},
that is, certain traces of $(E_m,H_m)$ are zero.
Surprisingly, we observe that the corresponding traces of the fields $\tilde E_\rho$ and $\tilde H_\rho$ on $\p D$ do not converge
to zero,
see Remark \ref{rmk:tang_cond}.

\item The convergence of the solutions in the so-called virtual space and physical space are different: In
the physical space the limit of $(\tilde E_\rho,\tilde H_\rho)$ contains a delta-distribution part but in the virtual space, images of the fields  given in (\ref{eq: virtual space images}),
converge in the $L^2$-sense as described in \cite{LZ}.

\item The blow up of the wave, that is, the appearance of the delta-distribution in the limit  $\rho\to 0$
happens with all frequencies. Thus the observed blow up is different to the destruction of the cloaking 
effect appearing at the eigenfrequencies of the inside of the cloak 
studied in  \cite{GKLU_2,GKLU-jst,KOVW}.

\item The high concentration of waves on $\p D$
has been observed in the physical literature by Zhang et. al. \cite{ZCWK}, and is called the ``extraordinary surface voltage effect".
We give a rigorous formulation, as a convergence result in the weak topology of $H^{-1}(\Omega)$, of this effect.
\end{enumerate}

We note that in this paper we  consider only the most commonly used approximative cloaks that are equivalent to multiplying
 the singular  electric permittivity $\tl\varepsilon(x)$ and magnetic permeability $\tl\mu(x)$ by a characteristic function.
Such approximation could be implemented also in different ways, see  \cite{Ammari1,
 Bao1,Bao2,GKLU_2, KOVW, KSVW,LaZ,LST,LZ,
 Ngu,Ngu2,NgVo,NgVo2}, and an interesting question is how the delta-distribution part of the limit solution $(E_0,H_0)$
depends on {how the approximation is used}. 
However, this question is outside the scope of this paper. 

}

The rest of this paper is organized as following. In Section \ref{sec:sing}, we describe in details the transformation optics-based ideal cloaking and the regularized approximate cloaking in the spherical geometric setting. The main result is formulated in Theorem \ref{thm:main} and the proof is presented in Section \ref{sec:proof}, where the Dirac's delta distribution is proved rigorously to appear at the exterior of the interface $\partial D^+$, as a limit of the high concentrated electromagnetic waves in the approximate cloak. In Section \ref{sec:motif}, an example is presented to demonstrate the physical motivation and insight of the existence of such condition. Our estimates rely heavily on the spherical harmonics and asymptotic of Bessel and Hankel functions, of which we provide some basics in the Appendix.

\section{Singular ideal electromagnetic cloaking and the regularization}\label{sec:sing}

How the transformation invariance of the system amounts to the undetectability of certain non-ambient media with respect to the observation is best seen when formulated as inverse boundary value problems for underlying PDEs (for near-field measurements) or inverse scattering problems (for far-field measurements). This is also why the layer-structured medium for cloaking was first studied in \cite{GLU} for electrostatics as a counter-example of uniqueness of an inverse problem, known as the Calder\'on's problem. Here we consider a similar construction for electromagnetism, that is for Maxwell's equations. In particular, we prescribe time-harmonic incident waves with time frequency $\omega$, which results in the time harmonic equations as following.

Let $\Omega$ denote a bounded domain in $\R^3$ with smooth boundary. 
Consider Maxwell's equations for time-harmonic electric and magnetic fields $(E, H)$, viewed as 1-forms,
\begin{equation}\label{eq:Maxwell-eqn}
\nabla\times E=i\omega B,\quad \nabla\times
H=-i\omega D+J\qquad\mbox{in\ \
$\Omega$}
\end{equation} 
where the 2-forms $B$, $D$ and $J$ denote the magnetic induction, the electric displacement and the current density respectively, Furthermore, we specify the constitutive relations as
\begin{equation}\label{eqn:DB}
D=\varepsilon E,\qquad B=\mu H,
\end{equation}
where $\varepsilon$ and $\mu$ represent the permittivity and permeability of the material in $\Omega$ . 

In the following we assume that $\varepsilon,\mu$ are in
$L^\infty(\Omega)^{3\times 3}$, and satisfy 
\begin{equation}\label{eq:regular-medium}
c_m|\xi|^2\leq \xi^T\varepsilon(x)\xi\leq c_M|\xi|^2,\ \ \
c_m|\xi|^2\leq \xi^T\mu(x)\xi\leq c_M|\xi|^2
\end{equation}
for some constants $c_m,c_M>0$ and all
$x\in\Omega$ and $\xi\in\R^3\backslash\{0\}$,
We remark that \eqref{eq:regular-medium} are physical conditions for
{\it regular} EM media. Given $J\in L^2(\Omega)^3$, we
denote  by $\mathcal{C}_{\varepsilon,\mu, J}^\omega$ a subset of $H^{-\frac 1 2}(\textrm{Div};\partial\Omega)\times H^{-\frac 1 2}(\textrm{Div};\partial\Omega)$ given by
\[\mathcal C_{\varepsilon,\mu,J}^\omega:=\;\Big\{(\nu\times E|_{\partial\Omega}, \nu\times H|_{\partial\Omega})~|~\textrm{$E, H\in H(\textrm{curl};\Omega)$ satisfy \eqref{eq:Maxwell-eqn} and \eqref{eqn:DB} {with $J$}}\Big\}\]
where 
$$H(\mbox{curl};\Omega):=\{\mathbf{u}\in
L^2(\Omega)^3;\textrm{curl }\mathbf{u}\in L^2(\Omega)^3\},$$ and 
\[H^{-\frac 1 2}(\mbox{Div};\partial\Omega):=\{\mathbf{f}\in H^{-\frac
1 2}(\partial\Omega)^3;\mathbf{f}\cdot\nu=0\ \mbox{a.e. on
$\partial\Omega$}\ \mbox{and}\ \mbox{Div}\, \mathbf{f}\in H^{-\frac
1 2}(\partial\Omega)\}\]
with $\mbox{Div}$ denoting the surface divergence on
$\partial\Omega$.
Also, we denote $\mathcal C_{\varepsilon,\mu}^\omega:=\mathcal C_{\varepsilon,\mu,J}^\omega$
with $J=0$.
%

 The set $\mathcal C_{\varepsilon,\mu,J}^\omega$ is known as the {\it
Cauchy data set} which encodes the full exterior (boundary) measurements of electromagnetic fields. The inverse problem is then to understand the dependence of $\mathcal C_{\varepsilon,\mu,J}^\omega$ on the parameters $(\varepsilon,\mu)$, in order to recover the latter from the former. Knowing this, the design of invisibility is to find the parameters to be prescribed in a cloaking device such that its Cauchy data is indistinguishable from that of the vacuum background, independent of the object to be cloaked in the device.  \\

First, let us define $M^{-T}=(M^{-1})^T$ for an matrix $M$. 
Consider a transformation
$x=F(y):\Omega\rightarrow\tilde\Omega$ between two bounded domains $\Omega, \tl\Omega\subset\R^3$ with smooth boundaries. Assume $F$ is bi-Lipschitz and orientation-preserving, and denote
by $M:=DF(y)=(\frac{\partial{x}_i}{\partial y_j})_{i,j=1}^3$ the
Jacobian matrix of $F$. The pull-back fields of the solution $(E,H)\in
H(\mathrm{curl};\Omega)\times H(\mathrm{curl};\Omega)$ to (\ref{eq:Maxwell-eqn}) by $F^{-1}$, are given in $\tilde\Omega$ by 
\begin{equation}\label{eq:pullback-EH}\begin{split}
\tilde E(x)=&(F^{-1})^*E(x):=(M^{-T}E)\circ F^{-1}(x),\\
\tilde H(x)=&(F^{-1})^*H(x):=(M^{-T}H)\circ F^{-1}(x),
\end{split}\end{equation}
\begin{equation}\label{eq:pullback-J}
\tilde J(x)=(F^{-1})^*J(x)=([\det(M)]^{-1}M J)\circ F^{-1}(x).
\end{equation}
Then we have that  $(\tilde E, \tilde H)\in
H(\mathrm{curl};\tilde\Omega)\times
H(\mathrm{curl};\tilde\Omega)$ satisfies Maxwell's equations
\begin{equation}\label{eq:trans Maxwell}
\tilde\nabla\times\tilde E=i\omega\tilde\mu\tilde H,\ \
\tilde\nabla\times\tilde
H=-i\omega\tilde\varepsilon\tilde
E+\tilde J\qquad \mbox{in\ \ $\tilde\Omega$},
\end{equation}
where $\tilde\nabla\times$ denotes the curl in the $x$-coordinates, and $\tilde\varepsilon, \tilde\mu$ are
the push-forwards of $\varepsilon,\mu$ via $F$, defined by
\begin{equation}\label{eq:pushfw-eps}
\tilde\varepsilon(x)=F_*\varepsilon(x):=([\det(M)]^{-1}{M\cdot\varepsilon\cdot
M^T})\circ F^{-1}(x),
\end{equation}
and similarly for $\tilde\mu=F_*\mu$.
%
Moreover, if we assume 
$F|_{\partial\Omega}=\mathrm{Id}$, using Green's
identity, it is directly verified that
\[\mathcal C^\omega_{\varepsilon,\mu,J}=\mathcal C^\omega_{F_*\varepsilon,F_*\mu,(F^{-1})^*J}.\]

This summarizes the basics of transformation optics in a rather general
setting, which we shall make essential use of in the construction of 
singular ideal cloaking and regularized approximate cloaking. Throughout the paper, we will focus on a spherical geometry of design. 

{ Next, we consider singular coordinate transformations.}
Consider the map
\begin{equation}
\label{eqn:F1-sing}
F_1:\, B_2\backslash\{0\}\,\rightarrow\,B_2\backslash\overline{B_1},\quad
F_1(y)=\left(1+\frac{1}{2}|y|\right)\frac{y}{|y|},\;\;\;0<|y|<2
\end{equation}
which blows up the origin to $B_1$ while keeping the boundary $\partial B_2$ fixed. In the cloaking layer $x\in B_2\backslash\overline{B_1}$, we prescribe the EM material parameters given by
\begin{equation}\label{eq:cloaking-medium}
\tilde\mu(x)=\tilde\eps(x)=({F_1}_*I)(x):=\left.\displaystyle\frac{(DF_1)\cdot I\cdot (DF_1)^T}{|\det(DF_1)|}(y)\right|_{y=F_1^{-1}(x)}
\end{equation}
where $I$ is the identity matrix, representing the homogeneous vacuum background material. 
In the region $B_1$ to be cloaked, we consider an arbitrary
but {\it regular} EM medium $(\eps_0, \mu_0)$ satisfying \eqref{eq:regular-medium}, i.e., $\tilde\mu(x)=\mu_0(x)$ and $\tilde\eps(x)=\eps_0(x)$ for $x\in B_1$, which can be viewed as the push-forwards of $(\mu_0, \eps_0)$ in $B_1$ by $F_2=\mathrm{Id}$. We denote the ``glued"  transformation by 
\begin{equation}\label{eq:trans-F}
F=(F_1, F_2):~(B_2\backslash\{0\}, B_1)\rightarrow (B_2\backslash\overline{B_1}, B_1).
\end{equation}
Noticing that $F_1$ is a radial dilation and by some simple calculations, we have 
that
\begin{equation}\label{eq:spherical-cloaking-medium}
\tilde\mu(x)=\tilde\varepsilon(x)=2\frac{(|x|-1)^2}{|x|^2}\mathbf{e}_r+2\mathbf{e}_\theta,\quad
1<|x|<2,
\end{equation}
where $\mathbf{e}_r$ and $\mathbf{e}_\theta$ are respectively, the
unit projections along radial and angular directions, i.e.,
$\mathbf{e}_r=I-\hat{x}\hat{x}^T,\
\mathbf{e}_\theta=\hat{x}\hat{x}^T,\ \hat{x}={x}/{|x|}.$ It is
readily seen that as one approaches the cloaking interface $\partial B_1^+$
the medium in the cloaking device becomes singular, in the sense that $\tilde\varepsilon$ and
$\tilde\mu$ no longer satisfy the condition \eqref{eq:regular-medium} (the eigenvalue along the radial direction degenerates). 


\subsection{Construction of regularized approximate cloaking}

For approximate acoustic cloaking by regularization, Kohn et al., in
\cite{KOVW}, proposed blowing up a small ball $B_\rho$ to $B_1$
using a nonsingular transformation $F_\rho$ which degenerates to the
singular transformation $F_1$ in \eqref{eqn:F1-sing} as
$\rho\rightarrow0$, while Greenleaf et al., in \cite{GKLU_2},
proposed truncating the singular medium in \eqref{eq:spherical-cloaking-medium} to $B_2\backslash\overline{B_R}$ for $R>1$. For the present study, we shall focus on the `blow-up-$B_\rho$-to-$B_1$' regularization.

Let $0<\rho<1$ denote a regularizing parameter and set
\begin{equation}\label{eq:a-b}
a=\frac{2(1-\rho)}{2-\rho},\;\;\;\;b=\frac{1}{2-\rho}.
\end{equation}
Consider the nonsingular transformation from $B_2$ to $B_2$ defined by
\begin{equation}\label{eq:trans-F_rho}
x:=F_\rho(y)=\left\{\begin{array}{ll}F_\rho^{(1)}(y)=(a+b|y|)\frac{y}{|y|},
&\hbox{ for } \rho<|y|<2,\\
F_\rho^{(2)}(y)=\frac{y}{\rho},& \hbox{ for } \|y|\leq\rho.\end{array}\right.
\end{equation}
Our approximate cloaking device is obtained by the push-forward of a
homogeneous medium in $B_2\backslash \overline{B_\rho}$ by
$F_\rho^{(1)}$. Suppose we hide a regular but arbitrary uniform EM
medium $(\eps_0,\mu_0)$ in the cloaked region $B_1$. Then the
corresponding EM material parameter in $B_2$ in the {\it physical space} is
\beq\label{eq: approx. cloak}
(\tl\eps_\rho(x),\tl\mu_\rho(x))=\left\{\begin{array}{cl}((F_\rho^{(1)})_*I, (F_\rho^{(1)})_*I),\;\;&\hbox{ for } 1<|x|<2,\\ (\eps_0,\mu_0),&
\hbox{ for } |x|<1.\end{array}\right.
\eeq
The EM fields $(\tl E_\rho, \tl
H_\rho)\in H(\mbox{curl};B_2)\times H(\mbox{curl};B_2)$
corresponding to $\{B_2;\tl\varepsilon_\rho,\tilde\mu_\rho\}$ then
satisfy Maxwell's equations
\begin{equation}\label{eq:MW-phy-1}
\left\{\begin{array}{l}
\nabla\times\tl E_\rho=i\omega\tl \mu_\rho\tl H_\rho,\;\;\;\;\nabla\times\tl H_\rho=-i\omega\tl\eps_\rho\tl E_\rho+\tl J\;\;\;\;\mbox{in }\;B_2,\\
\nu\times\tl E_\rho|_{\partial B_2}=f\in
H^{-1/2}(\mbox{Div};\partial B_2)
\end{array}\right.
\end{equation}
where $\tilde J\in (L^2(B_1))^3$ is the current source. At this point, we assume that it is supported in a smaller ball $B_{r_1}$ with radius $0<r_1<1$. 
Then the pull-back EM fields
\beq\label{eq: virtual space images}
(E_\rho, H_\rho)=
((F_\rho)^*\tl E_\rho, (F_\rho)^*\tl H_\rho)\in
H(\mbox{curl};B_2)\times H(\mbox{curl};B_2)
\eeq
satisfy Maxwell's equations in the {\em virtual space} of $y$
with parameters 
\[(\eps_\rho(y),\mu_\rho(y))=\left\{\begin{array}{cl}(I, I)\;\;&\rho<|y|<2,\\((F_\rho^{(2)})^*\eps_0,(F_\rho^{(2)})^*\mu_0)\;\;&|y|<\rho \end{array}\right.\]
and 
\[J=(F_\rho^{(2)})^*\tilde J.\]
Moreover, the observation $\mathcal C^\omega_{\tilde\varepsilon_\rho,\tilde\mu_\rho}$ of the whole cloaking object $\{B_2;\tilde\varepsilon_\rho,\tilde\mu_\rho\}$ is shown to be identical to $\mathcal C^\omega_{\varepsilon_\rho,\mu_\rho}$, that of a small inhomogeneity of radius $\rho$ in the background vacuum space. In particular, the regularized cloaking is expected to converge to the ideal cloaking as $\rho$ shrinks. The order of such convergence was discussed in \cite{KOVW,LZ} for both Helmholtz equations and time-harmonic Maxwell's equations. Here we reproduce some calculations from \cite{LZ} for Maxwell's equations in order to pursue our discussion.

{ Due to the two-layered structure,  the following notations for EM fields will be adopted
\[
\tl{E}_\rho:=
\left\{\begin{array}{ll} \tl{E}_\rho^{+}(x), &\hbox{for }x\in B_2\backslash \overline{B_1},\\
\tl{E}_\rho^{-}(x),&  \hbox{for }x\in B_1,\end{array}\right.\quad
\tl{H}_\rho:=
\left\{\begin{array}{ll} \tl{H}_\rho^{+}(x), &\hbox{for }x\in B_2\backslash \overline{B_1},\\
\tl{H}_\rho^{-}(x),&  \hbox{for }x\in B_1,\end{array}\right.
%
%
%
\] 
in the {\em physical space} and
\[
E_\rho:=
\left\{\begin{array}{ll} {E}_\rho^{+}(y), &\hbox{for }y\in B_2\backslash\overline{B_\rho},\\
{E}_\rho^{-}(y),&  \hbox{for }y\in B_\rho,\end{array}\right.\quad
{H}_\rho:=
\left\{\begin{array}{ll} {H}_\rho^{+}(y), &\hbox{for }y\in B_2\backslash\overline{B_\rho},\\
{H}_\rho^{-}(y),&  \hbox{for }y\in B_\rho,\end{array}\right.
%
\] in the {\em virtual space},}
and the fields satisfy the following transmission problems
\begin{equation}\label{eq:MW-phy-2}
\left\{\begin{array}{l}\nabla\times \tl E^+_\rho=i\omega\tl\mu_\rho(x)\tl H^+_\rho,\;\;\;\;\nabla\times\tl H^+_\rho=-i\omega\tl \eps_\rho(x)\tl E^+_\rho\;\;\;\;\mbox{in }\; B_2\backslash\overline{B_1},\\
\nabla\times\tl E^-_\rho=i\omega\mu_0\tl H^-_\rho,\;\;\;\;\nabla\times\tl H^-_\rho=-i\omega\eps_0\tl E^-_\rho+\tilde J\;\;\;\;\mbox{in }\;B_1,\\
\nu\times\tl E^+_\rho|_{\partial B_1^+}=\nu\times\tl E^-_\rho|_{\partial B_1^-},\;\;\;\;\nu\times\tl H^+_\rho|_{\partial B_1^+}=\nu\times\tl H^-_\rho|_{\partial B_1^-},\\
\nu\times\tl E^+_\rho|_{\partial B_2}=f.\end{array}\right.
\end{equation}
and
\begin{equation}\label{eq:MW-vir-2}
\left\{\begin{array}{l}\nabla\times E^+_\rho=i\omega H^+_\rho,\;\;\;\;\nabla\times H^+_\rho=-i\omega E^+_\rho\;\;\;\;\mbox{in }\; B_2\backslash\overline{B_\rho},\\
\nabla\times E^-_\rho=i\omega\mu_\rho(y)H^-_\rho,\;\;\;\;\nabla\times H^-_\rho=-i\omega\eps_\rho(y)E^-_\rho+J\;\;\;\;\mbox{in }\;B_\rho,\\
\nu\times E^+_\rho|_{\partial B_\rho^+}=\nu\times E^-_\rho|_{\partial B_\rho^-},\;\;\;\;\nu\times H^+_\rho|_{\partial B_\rho^+}=\nu\times H^-_\rho|_{\partial B_\rho^-},\\
\nu\times E^+_\rho|_{\partial B_2}=f.\end{array}\right.
\end{equation}

\smallskip

Now we are ready to present our main theorem. 

\begin{theorem} \label{thm:main}
Let $E$ and $H$ be 1-forms satisfying the Maxwell's equations on $B_2\backslash\{0\}$, at frequency $\omega>0$ which is not an eigenvalue for the background Maxwell's operator, 
\begin{equation}\left\{\begin{split}
&\curl E=i\omega H,\quad \curl H=-i\omega E\qquad \mbox{on }\;B_2\\
&\nu\times E|_{\partial B_2}= f,
\end{split}\right.
\end{equation}
and let $E_0$ and $H_0$ be solutions to
\begin{equation}\left\{\begin{split}
&\curl E_0=i\omega\mu_0 H_0,\quad \curl H_0=-i\omega \varepsilon_0E_0+\tilde J\qquad\mbox{on }\; B_1\\
&\nu\cdot E_0|_{\partial B_1}=\nu\cdot H_0|_{\partial B_1}=0
\end{split}\right.
\end{equation}
where $\tilde J$ is supported on $B_{r_1}$ for some $r_1<1$. 
Moreover, suppose that $\tl E$ and $\tl H$ belong to $L^1(B_2;\R^3)$ such that
\begin{equation}
(\tl E, \tl H)=\left\{\begin{split}
&({F_1}_*E, {F_1}_*H), &\textrm{ in } B_2\backslash \overline{B_1}\\
&(E_0, H_0), & \textrm{ in }B_1
\end{split}\right.
\end{equation}
where $F_1$ is the singular transformation given by \eqref{eqn:F1-sing}. 
Then we have
\begin{equation}
\tl E_\rho\rightharpoonup\tl E+\alpha[J]\delta_{\partial B_1}\quad \tl H_\rho\rightharpoonup\tl H+\beta[J]\delta_{\partial B_1}\quad\mbox{ as }\;\rho\rightarrow0
\end{equation}
{in the weak topology of $H^{-1}(B_2)$,} 
where $\delta_{\partial B_1}$ is the Dirac's delta distribution supported on $\partial B_1$ and $\alpha[J]$, $\beta[J]$ are smooth and depending on the source term $J$ (see Remark \ref{rmk:delta-strength}).
Moreover, we have (see Remark \ref{rmk:normal-int})
\begin{equation}\label{eqn:interior-bdry}\nu\cdot (\textrm{curl } \tl H)|_{\partial B_1^-}=\nu\cdot (\textrm{curl }\tl E)|_{\partial B_1^-}=0\end{equation}
and 
\begin{equation}\label{eqn: ext-bdry tantential}\nu\times  \tl H|_{\partial B_1^+}=\nu\times \tl E |_{\partial B_1^+}=0.
\end{equation}

\end{theorem}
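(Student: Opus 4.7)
My plan is to reduce the problem to one-dimensional radial problems by expanding all fields in vector spherical harmonics. Because of the spherical symmetry of the cloak, both $(E_\rho,H_\rho)$ in the virtual space and $(\tl E_\rho,\tl H_\rho)$ in the physical space split into uncoupled TE and TM modes indexed by $(n,m)$; in each vacuum annulus the radial coefficients are combinations of $j_n(\omega\cdot)$ and $y_n(\omega\cdot)$, and the source contributes through a particular solution inside $B_{\rho r_1}$ (virtual) or $B_{r_1}$ (physical). I would first write explicit ans\"atze for $E_\rho^\pm,H_\rho^\pm$ in $B_\rho$ and $B_2\setminus\overline{B_\rho}$, and for $\tl E_\rho^-,\tl H_\rho^-$ in $B_1$, and then fix all the coefficients using the Dirichlet data $f$ at $\p B_2$, regularity at the origin of the virtual space, and the transmission conditions at $\p B_\rho$.

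The key structural observation is that, because $F_\rho^{(2)}(y)=y/\rho$ scales $1$-forms by $\rho^{-1}$ in the pullback, a bounded inside physical field $\tl E_\rho^-$ produces a virtual inside field $E_\rho^-(y)=\rho^{-1}\tl E_\rho^-(y/\rho)$ of order $\rho^{-1}$, so that virtual transmission $\nu\times E_\rho^+|_{\p B_\rho^+}=\nu\times E_\rho^-|_{\p B_\rho^-}$ forces $E_\rho^+$ to develop a boundary layer of amplitude $O(\rho^{-1})$ at $\p B_\rho^+$. Using the small-argument asymptotics $j_n(t)\sim t^n/(2n+1)!!$ and $y_n(t)\sim -(2n-1)!!/t^{n+1}$ from the Appendix, I would extract the leading $\rho$-dependence of the mode coefficients, showing that the regular part of $E_\rho^+$ converges to the background $E$, while the singular (Hankel) part has amplitude tuned to transport the interior trace $(E_0,H_0)|_{\p B_1}$ across $\p B_\rho$.

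Then I would push this virtual field forward by $F_\rho^{(1)}$ to the physical shell. In spherical orthonormal frames the push-forward acts diagonally, multiplying the radial component by $2-\rho$ and the tangential components by $s/|F_\rho^{(1)}(y)|$, which equals $\rho$ exactly at the cloaking interface and grows to $1$ at $\p B_2$. Testing $\tl E_\rho$ against $\phi\in H^1_0(B_2;\C^3)$, I would split the pairing into a bulk part, on which dominated convergence together with the virtual-space $L^2$ bounds gives $\int\tl E\cdot\phi$, and a thin shell $\{1<|x|<1+\delta\}$ corresponding under $(F_\rho^{(1)})^{-1}$ to $\{\rho<|y|<\rho+O(\delta)\}$. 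A change of variables in the thin shell combines the Jacobian of $F_\rho^{(1)}$ with the $O(\rho^{-1})$ boundary-layer profile to produce, in the joint limit $\rho\to 0$ followed by $\delta\to 0$, the surface integral $\int_{\p B_1}\alpha[J]\cdot\phi\,dS$, with $\alpha[J],\beta[J]$ identified as smooth functions on $\p B_1$ determined by $(E_0,H_0)|_{\p B_1}$. The condition \eqref{eqn:interior-bdry} then follows by taking normal traces of Maxwell's equations in $B_1$ and using $\supp\tl J\subset B_{r_1}$ together with $\nu\cdot E_0=\nu\cdot H_0=0$, while \eqref{eqn: ext-bdry tantential} is immediate from the vanishing tangential Jacobian of the singular pushforward $F_1{}_*$ at the origin.

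The main obstacle will be to make the boundary-layer analysis uniform in the mode index $n$ so that the modewise convergence assembles into convergence in the weak topology of $H^{-1}(B_2)$: the factorial constants in the small-argument asymptotics of $j_n,y_n$ have to be absorbed using the smoothness of $f$ and $\tl J$. A secondary difficulty is tracking the contribution of the virtual source $J=(F_\rho^{(2)})^*\tl J$, which is supported in $B_{\rho r_1}$ with amplitude $O(\rho^2)$, so that the resulting $\alpha[J],\beta[J]$ come out with the correct nonzero strength, and ensuring that the hypothesis that $\omega$ avoids background Maxwell eigenvalues precludes resonances on $\p B_2$.
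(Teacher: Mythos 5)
Your overall strategy — vector spherical harmonic expansion, matching transmission conditions at $\partial B_\rho$ (virtual) / $\partial B_1$ (physical), and locating the $\delta$-distribution in a boundary layer at the cloaking interface — is the same as the paper's. However, there are two concrete issues with the way you propose to execute it.

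First, the scaling of the virtual source is off by a sign in the exponent. Since $J=(F_\rho^{(2)})^*\tilde J$ and $F_\rho^{(2)}(y)=y/\rho$, the $2$-form pull-back rule \eqref{eq:pullback-J} gives $J(y)=\det(DF_\rho^{(2)})\,(DF_\rho^{(2)})^{-1}\tilde J(y/\rho)=\rho^{-2}\tilde J(y/\rho)$, so the virtual source has amplitude $O(\rho^{-2})$, not $O(\rho^2)$. If you carry that error through, your estimates for the mode coefficients $d^m_{n,\rho},\eta^m_{n,\rho}$ will come out with the wrong powers of $\rho$. The paper sidesteps this entirely by never tracking the virtual source: it expands $\tilde E_\rho^-$ directly in the physical annulus $B_1\setminus\overline{B_{r_1}}$, where the radiating coefficients $p_n^m,q_n^m$ are $\rho$-independent because $\tilde J$ is fixed in physical space. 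All subsequent $\rho$-asymptotics are then expressed in terms of the fixed $q_n^m$, which also makes the uniform-in-$n$ estimates you worry about tractable via the rapid decay in \eqref{eq:est-q-2}.

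Second, your mechanism for extracting the $\delta$ — splitting the test pairing into a bulk part plus a moving thin shell $\{1<|x|<1+\delta\}$ and then taking a double limit $\rho\to 0$ followed by $\delta\to 0$ — is more delicate than what the paper does and as stated would not obviously close: the delta term comes from the region $|x|-1=O(\rho)$, i.e.\ $\delta$ must shrink together with $\rho$, so the iterated limit is the wrong one. The paper avoids this by integrating by parts in $r$ against the antiderivative $A_n(r)=\int_r^{\rho_1} h_n^{(1)}(\omega s)(a+bs)^2 s^{-1}\,ds$, which isolates the boundary term $d^m_{n,\rho}\,\phi^m_n(1)\,A_n(\rho)$. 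The asymptotics $d^m_{n,\rho}=O(\rho^{n+1})$ and $A_n(\rho)\sim c_n\rho^{-(n+1)}$ then make this boundary term converge to a finite, nonzero multiple of $\phi_n^m(1)$, which is exactly the $\delta_{\partial B_1}$ contribution, while the remainder integral vanishes like $\rho^{1/2}$. This is a cleaner and more rigorous route to the same conclusion; if you replace your double-limit shell argument by this integration-by-parts identity, and correct the virtual source scaling (or simply work with the physical-space source coefficients), your plan matches the paper's proof.

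One smaller remark: your derivation of \eqref{eqn: ext-bdry tantential} from the degeneracy of $F_{1*}$ at the origin is correct in substance, but note that it is the tangential eigenvalues of $(DF_1)^{-T}$ (acting on the $1$-form $E$) that vanish at $\partial B_1^+$, not of $DF_1$ itself; the radial eigenvalue of $DF_1$ is $1/2$ there, and the tangential eigenvalues of $DF_1$ actually blow up.
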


We note that despite (\ref{eqn: ext-bdry tantential}), 
\begin{equation}\label{eqn: ext-bdry tantential 2}
{\bf h}_t:=\lim_{\rho\to 0}
\nu\times  \tl H_\rho |_{\partial B_1}\quad \hbox{and}\quad
{\bf e}_t:=\lim_{\rho\to 0}
\nu\times \tl E_\rho |_{\partial B_1}.
\end{equation}
may be non-zero. {(See Remark \ref{rmk:tang_cond})}

\section{Motivation: A physical example of scattering from the half-space.}\label{sec:motif}
Before the analysis of the waves in an approximative cloak, let us
 us consider as a motivation on a simple example of scattering of a plane wave from homogeneous half-space that resembles the approximate cloak near the
 cloaking surface.  
To begin with, we decompose
the space $\R^3$ to half spaces $U_+=\{(x,y,z);\ x>0\}$ and  $U_-=\{(x,y,z);\ x<0\}$, and
their interface $\Sigma=\{(x,y,z);\ x=0\}$. We assume that the electromagnetic
parameters in $U_-$ correspond to vacuum, that is, $\e_-=I$ and
$\mu_-=I$. In $U_+$ the electromagnetic
parameters are given by constant matrices,
\ba
\e_+=\textrm{diag }(\e^+_x,\e^+_y,\e^+_z)\quad \mu_+=\textrm{diag }(\mu^+_x,\mu^+_y,\mu^+_z),
\ea 
Below, $\hat e_x=(1,0,0)$,  $\hat e_y=(0,1,0)$, and  $\hat e_z=(0,0,1)$ 
are the unit coordinate vectors. 

To study the case that is close to the 3-dimensional approximate cloak studied
in this paper, we let
\[\e^+_x=\mu^+_x=2\rho^2,\quad \e^+_y=\mu^+_y=2,\quad\e^+_z=\mu^+_z=2.\] 
To illustrate, 
we consider an incident plane wave whose magnetic
field is parallel to $y$-axis. It comes from the domain 
 $U_-$ to the interface $\Sigma$ with non-zero incident angle and 
 scatters and refracts at the interface. More precisely,
 we let magnetic field be $H_\rho(x,z)=h(x,z)\hat e_y$, where 
 \[h(x,z)=h^+(x,z):=h^+ e^{i(k_x^+ x+k_z^+ z)}\hat e_y \quad\textrm{ for }\; x>0\] 
and
\[h(x,z)=h^-(x,z):=(h^{in} e^{i(k_x^- x+k_z^- z)}+h^{sc}  e^{i(-k_x^- x+k_z^- z)})\hat e_y\quad\textrm{ for }\;x<0.\]
Recall that this corresponds to the decomposition of the wave in $U_-$ into a sum of
incident and reflected plane waves. At frequency $\omega>0$, given the incident amplitude $h^{in}\in \C$
and the $z$-component of the incident wave vector $k^+_z\in (0,\omega)$,
we  consider the behavior of the transmitted wave in $U_-$ with different
 values of $\rho>0$. 

The field $H_\rho(x,z)$ satisfies Maxwell's equations in $U_{\pm}$
if
\beq\label{eq: diff equ 1}
& &\frac 1{\mu_y}\left[\frac \p{\p x}\bigg(\frac 1{\e_z} \frac {\p h}{\p x}\bigg)
+\frac \p{\p z}\bigg(\frac 1{\e_x} \frac {\p h}{\p z}\bigg)\right]+\omega^2 h=0.
\eeq
This implies 
\beq\label{eq: dispersion 1}
\frac 1{\mu_y^\pm\e_x^\pm}(k_z^\pm)^2+\frac 1{\mu_y^\pm\e_z^\pm}(k_x^\pm)^2=\omega^2.
\eeq
Correspondingly, the electric field is of the form $E_\rho=E_x\hat e_x+E_z\hat e_z$ where
\ba
E_x=-i\omega^{-1}\frac 1{\e_x} \frac {\p h}{\p z},\quad E_z=i\omega^{-1}\frac 1{\e_z} \frac {\p h}{\p x}.
\ea
Now the transmission conditions on $\Sigma$,
\ba
& &\hat e_x\times H_\rho|_{\Sigma^-}=\hat e_x\times H_\rho|_{\Sigma^+},\\
& &\hat e_x\times E_\rho|_{\Sigma^-}=\hat e_x\times E_\rho|_{\Sigma^+}
\ea
give $k^-_z=k^+_z$ and
\beq\label{eq: B1}
& & h^{in}+h^{sc}=h^+,
\\ \label{eq: B2}
& & k^-_x(h^{in}-h^{sc})  = \frac 12 k^+_x h^+ .
\eeq

Since $k^-_z=k^+_z$, equation (\ref{eq: dispersion 1})
 implies
 \ba
k_x^+=\sqrt{{\mu_y^+\e_z^+}\left(\omega^2-\frac 1{\mu_y^+\e_x^+}(k_z^+)^2\right)}
=\sqrt{4\omega^2-\frac {1}{\rho^2}(k_z^+)^2}=
ik_z^+\rho^{-1}(1+O(\rho^{-1})).
 \ea
Note that when $\rho\to 0$, the expression under the square root becomes
negative. We have choosen above the positive sign for the imaginary
part of the square. Denote below $t(\rho)=\hbox{Im}\, k_x^+$.
Then $t(\rho)\to \infty$ as $\rho\to 0$.

By solving (\ref{eq: B1}) and (\ref{eq: B2}) we obtain 
\ba
 h^+=\frac {4k^-_x}{2k^-_x+k^+_x}h^{in},
\quad h^{sc}=-\left(1-\frac{4k^-_x} {2k^-_x+k^+_x} \right)h^{in}.
\ea
Thus
\ba
& &H_\rho(x,z)=\frac {4k^-_x h^{in}}{2k^-_x+t(\rho)i} e^{-t(\rho) x+ik_z^- z}\hat e_y,\quad 
\hbox{for }x>0,\\
& &H_\rho(x,z)=\left(e^{ik_x^-x+ik_z^- z}
-\left(1-\frac{4k^-_x} {2k^-_x+t(\rho)i} \right) e^{-ik_x^-x+ik_z^- z}\right)h^{in}
\hat e_y,\quad 
\hbox{for }x<0.\\
\ea
Next, let $\chi_{\R_+}(s)$ be the characteristic function of the set $\R_+$. Since
\ba
\lim_{t\to \infty} \chi_{\R_+}(s)\frac 1t e^{-ts}=\delta_0(s)
\ea
in the sense of distributions in $\mathcal D^\prime(\R)$, we see that 
\ba
\lim_{\rho\to 0} H_\rho(x,z)=
\chi_{\R_-}(x)(e^{ik_x^-x+ik_z^- z}
- e^{-ik_x^-x+ik_z^- z})h^{in}
\hat e_y-
4ik^-_x h^{in}\delta_0(x) \hat e_y,
\ea
in the sense of distributions in $\mathcal D^\prime(\R^3)$.
This means that the magnetic field $H_\rho$ 
tends to a generalized function that is a sum of a measurable function
and a delta distribution
 as $\rho\to 0$.  Note that here the delta distribution component is caused
 by the concentration of the waves in a thin layer near the interface in the region $U_-$.
 Note that the  blow up of the fields at the interface, that causes the delta distribution to
 appear, causes the boundary also to reflect the incoming wave perfectly
 with the reflection coefficient $-1$. Similar considerations to the above one,
 with slightly different setting where $\mu$ is constant in the whole space,
 are done in \cite{ZCWK}. Also, the physical interpretation of 
the  blow-up of the fields at the interface as infinite polarization of the material is analyzed in  \cite{ZCWK} in detail. Our aim is to show that  similar phenomenon appears in the 
3-dimensional  approximate cloak as the cloak tends to the ideal one
and analyze the convergence of the electromagnetic fields in the sense of distributions.

\cout{
We finish this part by listing some identities related to this nonsingular transformation
\begin{equation}
DF_\rho^{(1)}(y)=\frac{a+b|y|}{|y|}\left(I-\hat y\hat y^T\right)+b~\hat y\hat y^T,\quad DF_\rho^{(2)}(y)=\frac{1}{\rho}I,
\end{equation}
\begin{equation}
\det(DF_\rho^{(1)})(y)=\left(\frac{a+b|y|}{|y|}\right)^2b,\quad \det(DF_\rho^{(2)})(y)=\rho^{-3}
\end{equation}
\begin{equation}
y={F_\rho^{(1)}}^{-1}(x)=\left(-\frac{a}{b}+\frac{|x|}{b}\right)\frac{x}{|x|},\quad y={F_\rho^{(2)}}^{-1}(x)=\rho x,
\end{equation}
\begin{equation}
D{F_\rho^{(1)}}^{-1}(x)=\left(-\frac{a}{b|x|}+\frac{1}{b}\right)\left(I-\hat x\hat x^T\right)+\frac{1}{b}\hat x\hat x^T,\quad D{F_\rho^{(2)}}^{-1}(x)=\rho I,
\end{equation}
\begin{equation}
\det(D{F_\rho^{(1)}}^{-1})(x)=\left(-\frac{a}{b|x|}+\frac{1}{b}\right)^2\frac{1}{b},\quad \det(D{F_\rho^{(2)}}^{-1})(x)=\rho^3.
\end{equation}
Overall, these give us 
\begin{equation}
({F_\rho^{(1)}})_*I=\frac{(|x|-a)^2}{b|x|^2}\left(I-\hat x\hat x^T\right)+\frac{1}{b}\hat x\hat x^T,
\end{equation}
and
\[(F_\rho^{(2)})^*\tilde\varepsilon_\rho (y)=\frac{\tilde\varepsilon_\rho(y)}{\rho}.\]
Moreover, we have the following relations: for $2>R>1$,
\begin{equation}\label{eq:tE-BR+}
\begin{split}
\tilde\nu\times\tilde E_\rho^+\big|_{\partial B_R}=&\hat x\times (F_\rho^{(1)})_*E_\rho^+(x)\big|_{\partial B_R}\\
=&\hat x\times \left[\left(-\frac{a}{b|x|}+\frac{1}{b}\right)\left(I-\hat x\hat x^T\right)+\frac{1}{b}\hat x\hat x^T\right]E_\rho^+\circ {F_\rho^{(1)}}^{-1}(x)\big|_{\partial B_R}\\
=&\left(-\frac{a}{b|x|}+\frac{1}{b}\right)\hat x\times E_\rho^+\circ F_\rho^{-1}(x)\big|_{\partial B_R}\\
=&\left(\frac{R-a}{bR}\right)(\hat y\times E_\rho^+)\big|_{\partial B_r}\circ F_\rho^{-1}(x),\qquad \left(r=\frac{R-a}{b}\right),
\end{split}\end{equation}
and for $1>R>0$, 
\begin{equation}\label{eq:tE-BR-}
\begin{split}
\tilde\nu\times\tilde E_\rho^-\big|_{\partial B_R}=&\hat x\times (\rho I)E_\rho^-\circ {F_\rho^{(2)}}^{-1}(x)\big|_{\partial B_R}\\
=&\rho (\hat y\times E_\rho^-)\big|_{\partial B_r}\circ {F_\rho^{(2)}}^{-1}(x),\qquad \left(r=\rho R\right).
\end{split}\end{equation}
As for the normal component we have for $x\in B_2\backslash\overline{B_1}$
\begin{equation}\label{eq:nE+}
\begin{split}
\hat x\cdot\tilde E_\rho^+(x)=&\hat x\cdot(F_\rho^{(1)})_*E_\rho^+(x)\\
=&\hat x^T\left[\left(-\frac{a}{b|x|}+\frac{1}{b}\right)\left(I-\hat x\hat x^T\right)+\frac{1}{b}\hat x\hat x^T\right]E_\rho^+\circ {F_\rho^{(1)}}^{-1}(x)\\
=&\frac{1}{b}(\hat y\cdot E_\rho^+)\circ {F_\rho^{(1)}}^{-1}(x) 
\end{split}\end{equation}
}

\section{Limiting behavior at the interface}\label{sec:proof}

\subsection{Spherical harmonic expansions}
Through out the rest of the paper, we assume that the cloaked medium is uniform, i.e., $\varepsilon_0$ and $\mu_0$ are constants. Then the EM fields $(\tilde E_\rho^-, \tilde H_\rho^-)$ and $(E_\rho^+, H_\rho^+)$ both admit the spherical harmonic expansions, given by 
\begin{equation}\label{eq:SHE-phy-}\begin{split}
\tl E^-_\rho=&~\eps_0^{-1/2}\sum_{n=1}^{\infty}\sum_{m=-n}^{n}\alpha_n^mM_{n,k\omega}^m+\beta_n^m\nabla\times M_{n,k\omega}^m+p_n^mN_{n, k\omega}^m+q_n^m\nabla\times N_{n,
k\omega}^m\\
\tl H^-_\rho=&~\frac{1}{ik\omega}\mu_0^{-1/2}\sum_{n=1}^{\infty}\sum_{m=-n}^{n}k^2\omega^2\beta_n^mM_{n,k\omega}^m+\alpha_n^m\nabla\times M_{n,k\omega}^m\\
&\hspace{5cm}+k^2\omega^2q_n^mN_{n,k\omega}^m+p_n^m\curl N_{n,k\omega}^m,
\end{split}\end{equation}
for $x\in B_1\backslash\overline{B_{r_1}}$, where $k:=(\mu_0\eps_0)^{1/2}$, and
\begin{equation}\label{eq:SHE-vir+}\begin{split}
E^+_\rho=&\sum_{n=1}^\infty\sum_{m=-n}^n \gamma_n^mM_{n,\omega}^m+\eta_n^m\nabla\times M_{n,\omega}^m+c_n^mN_{n,\omega}^m+d_n^m\nabla\times N_{n,\omega}^m,\\
H^+_\rho=&\frac{1}{i\omega}\sum_{n=1}^\infty\sum_{m=-n}^n\omega^2\eta_n^mM_{n,\omega}^m+\gamma_n^m\nabla\times
M_{n,\omega}^m+\omega^2d_n^mN_{n,\omega}^m+c_n^m\nabla\times
N_{n,\omega}^m.
\end{split}\end{equation}
for $y\in
B_2\backslash\overline{B_\rho}$, where the basis vectors are defined in Appendix \ref{appendix:A} by \eqref{eqn:vec-base}.
Notice that the last two terms in the expansion of $\tl E^-_\rho$ with respect to $N_{n,k\omega}^m$ and $\curl N_{n,k\omega}^m$ represent the radiating EM field generated by $\tilde J$. Hence, the coefficients $p_n^m$ and $q_n^m$ are determined. 

In terms of the vector spherical harmonics \eqref{eq:SHV-UV}, we have the expansion of the boundary condition
\begin{equation}\label{eq:SHE-f}
f=\sum_{n=1}^{\infty}\sum_{m=-n}^{n}{\color{black}S_n}(f_{nm}^{(1)}U_n^m+f_{nm}^{(2)}V_n^m).
\end{equation}
where $S_n:=\sqrt{n(n+1)}$.
By \eqref{eq:bdry-MN}, \eqref{eq:bdry-curlMN-tang} and the boundary condition
$\nu\times E^{+}_\rho|_{\partial B_2}=f$, we obtain
\begin{equation}\label{eq:coeff-sys-1}
c_n^mh_n^{(1)}(2\omega)+\gamma_n^mj_n(2\omega)=f_{nm}^{(1)},\quad
d_n^m\mathcal{H}_n(2\omega)+\eta_n^m\mathcal{J}_n(2\omega)=2f_{nm}^{(2)}.
\end{equation}
The transmission conditions in \eqref{eq:MW-phy-2} reads
\begin{equation}\label{eqn:transmission}\hat x\times \tilde E^+_\rho|_{\partial B_1^+}=\rho(\hat y\times E^+_\rho|_{\partial B_\rho^+})=\hat x\times\tilde E^-_\rho|_{\partial B_1^-},
\end{equation}
which gives
\begin{equation}\label{eq:coeff-sys-2}
\begin{split}
\rho c_n^mh_n^{(1)}(\omega\rho)+\rho\gamma_n^mj_n(\omega\rho)=&\eps_0^{-1/2}(\alpha_n^m j_n(k\omega)+p_n^m h_n^{(1)}(k\omega)),\\
d_n^m\mathcal{H}_n(\omega\rho)+\eta_n^m\mathcal{J}_n(\omega\rho)=&\eps_0^{-1/2}(\beta_n^m\mathcal{J}_n(k\omega)+q_n^m\mathcal{H}_n(k\omega)).
\end{split}\end{equation}
Similarly, the transmission condition on the magnetic field implies
\begin{equation}\label{eq:coeff-sys-3}
\begin{split}
k c_n^m\mathcal{H}_n(\omega\rho)+k\gamma_n^m\mathcal{J}_n(\omega\rho)=&\mu_0^{-1/2}(\alpha_n^m\mathcal{J}_n(k\omega)+p_n^m\mathcal{H}_n(k\omega)),\\
\rho
d_n^mh_n^{(1)}(\omega\rho)+\rho\eta_n^mj_n(\omega\rho)=&\mu_0^{-1/2}(k\beta_n^m
j_n(k\omega)+k q_n^mh_n^{(1)}(k\omega)).
\end{split}\end{equation}
Solving \eqref{eq:coeff-sys-2} and \eqref{eq:coeff-sys-3}, we have
\begin{equation}\label{eq:coeff-c-d-alpha-beta}
\begin{split}c_n^m=t_1\gamma_n^m+t_1'p_n^m,\quad&\alpha_n^m=t_2\gamma_n^m+t_2'p_n^m,\\
d_n^m=t_3\eta_n^m+ t_3'q_n^m,\quad&\beta_n^m=t_4\eta_n^m+t_4'q_n^m,\end{split}\end{equation}
where 
\begin{equation}\label{eq:t}\begin{split}
t_1:=&\frac{1}{D_n}\left[\eps_0^{-1/2}k\mathcal{J}_n(\omega\rho)j_n(k\omega)-\mu_0^{-1/2}\rho j_n(\omega\rho)\mathcal{J}_n(k\omega)\right],\\
t_2:=&\frac{1}{D_n}\left[k\rho\mathcal{J}_n(\omega\rho)h_n^{(1)}(\omega\rho)-k\rho j_n(\omega\rho)\mathcal{H}_n(\omega\rho)\right],\\
t_3:=&\frac{1}{D_n'}\left[\mu_0^{-1/2}k \mathcal{J}_n(\omega\rho)j_n(k\omega)-\eps_0^{-1/2}\rho j_n(\omega\rho)\mathcal{J}_n(k\omega)\right],\\
t_4:=&\frac{1}{D_n'}\left[\rho\mathcal{J}_n(\omega\rho)h_n^{(1)}(\omega\rho)-\rho
j_n(\omega\rho)\mathcal{H}_n(\omega\rho)\right];
\end{split}\end{equation}
and
\begin{equation}\label{eq:t'}
\begin{split}
t_1':=&\frac{1}{D_n}\left[h^{(1)}_n(k\omega )\mathcal{J}_n(k\omega )-\mathcal{H}_n(k\omega )j_n(k\omega )\right],\\
t_2':=&\frac{1}{D_n}\left[\eps_0^{-1/2}k h^{(1)}_n(k\omega )\mathcal{H}_n(\omega\rho)-\mu_0^{-1/2}\rho\mathcal{H}_n(k\omega )h^{(1)}_n(\omega\rho)\right],\\
t_3':=&\frac{1}{D_n'}\left[\mathcal{J}_n(k\omega)h^{(1)}_n(k\omega)-\mathcal{H}_n(k\omega)j_n(k\omega)\right],\\
t_4':=&\frac{1}{D_n'}\left[\mu_0^{-1/2}kh^{(1)}_n(k\omega)\mathcal{H}_n(\omega\rho)-\eps_0^{-1/2}\rho\mathcal{H}_n(k\omega)h^{(1)}_n(\omega\rho)\right],
\end{split}
\end{equation}
with
\begin{equation}\begin{split}
D_n=&\mu_0^{-1/2}\rho h_n^{(1)}(\omega\rho)\mathcal{J}_n(k\omega)-\eps_0^{-1/2}k\mathcal{H}_n(\omega\rho)j_n(k\omega),\\
D_n'=&\eps_0^{-1/2}\rho h_n^{(1)}(\omega\rho)\mathcal{J}_n(k\omega)-\mu_0^{-1/2}k\mathcal{H}_n(\omega\rho)j_n(k\omega).
\end{split}\end{equation}
Plugging into \eqref{eq:coeff-sys-1}, we obtain
\begin{equation}\label{eq:coeff-gamma-eta}
\gamma_n^m=\displaystyle\frac{f_{nm}^{(1)}-p_n^m
t_1'h_n^{(1)}(2\omega)}{t_1h_n^{(1)}(2\omega)+j_n(2\omega)},\;\;\;
\eta_n^m=\displaystyle\frac{2f_{nm}^{(2)}- t_3'
q_n^m\mathcal{H}_n(2\omega)}{t_3\mathcal{H}_n(2\omega)+\mathcal{J}_n(2\omega)}.\end{equation}

In this notes, we are interested in the effect of an active source cloaked in $B_1$, hence assuming no incident wave for simplicity, i.e., the case of zero boundary condition $f=0$. Therefore, we have
\begin{equation}\label{eq:coeff-gamma-eta-no-f}
\gamma_n^m=\displaystyle\frac{-p_n^m
t_1'h_n^{(1)}(2\omega)}{t_1h_n^{(1)}(2\omega)+j_n(2\omega)},\;\;\;
\eta_n^m=\displaystyle\frac{- t_3'
q_n^m\mathcal{H}_n(2\omega)}{t_3\mathcal{H}_n(2\omega)+\mathcal{J}_n(2\omega)}.\end{equation}

\bigskip

We will also need the following asymptotic estimates 
derived from \eqref{eq:asympt-jh} {\color{black}for $n\geq0$} 
\begin{equation}\label{eq:asympt-t34-rho}\begin{split}
t_3
=&\frac{i\pi(n+1)}{\Gamma(n+1/2)\Gamma(n+3/2)n}\left(\frac{\omega}{2}\right)^{2n+1}\rho^{2n+1}{\color{black}\left(1+O_{\rho\rightarrow0}(\rho)\right)},\\
t_4
=& \frac{(2n+1)\sqrt\pi}{\Gamma(n+3/2)\mu_0^{-1/2}k\omega nj_n(k\omega)}\left(\frac{\omega}{2}\right)^{n+1}\rho^{n+1}\left(1+O_{\rho\rightarrow0}(\rho)\right),\\
t_3'
= &\frac{2i\sqrt\pi\left[\mathcal{J}_n(k\omega)h^{(1)}_n(k\omega)-\mathcal{H}_n(k\omega)j_n(k\omega)\right]}{\Gamma(n+1/2)\mu_0^{-1/2}knj_n(k\omega)}\left(\frac{\omega}{2}\right)^{n+1}\rho^{n+1}\left(1+O_{\rho\rightarrow0}(\rho)\right)\\
t_4'
= & -\frac{h_n^{(1)}(k\omega)}{j_n(k\omega)}\left(1+O_{\rho\rightarrow0}(\rho)\right),
\end{split}\end{equation}
{\color{black}where  $|O_{\rho\rightarrow0}(\rho)|\leq C(n)\rho$  for some constant $C(n)$ depending on $n$. }

\cout{

Before proving the main theorem, we formally explain the heuristics behind it by first looking at the following norm of the normal component of the electric field from $\partial B_1^+$, the exterior of the interface, 
\[\begin{split}
\int_{B_2\backslash\overline{B_1}}|\hat x\cdot\tilde E^+_\rho|^pdx=&\int_{B_2\backslash\overline{B_\rho}}\big|\frac{1}{b}\hat y\cdot E^+_\rho\big|^p|\det(DF_\rho^{(1)})|dy\\
=&\int_{\S^2}d\hat y\int_\rho^2\big|\frac{1}{b}\hat y\cdot E^+_\rho\big|^pb\big(\frac{a+br}{r}\big)^2r^2dr.
\end{split}\]
Formally, assuming all series converge absolutely and switching order of integration, summation and limits is justified, the expansion \eqref{eq:SHE-vir+}, \eqref{eq:bdry-MN} and \eqref{eq:bdry-curlMN-normal} would imply
\begin{equation}\label{eq:normal-E-Lp}\begin{split}
\int_{B_2\backslash\overline{B_1}}|\hat x\cdot\tilde E^+_\rho|^pdx=&\int_\rho^2\sum_{n=1}^\infty\sum_{m=-n}^n|d_n^m|^pS_n^{2p}r^{-p}|h^{(1)}_n(\omega r)|^pb^{1-p}(a+br)^2dr\\
&+\int_\rho^2\sum_{n=1}^\infty\sum_{m=-n}^n|\eta_n^m|^pS_n^{2p}r^{-p}|j_n(\omega r)|^pb^{1-p}(a+br)^2dr
\end{split}\end{equation}
Instead, consider the integration of each term of the series on a narrow band where $r\in (\rho, 2\rho)$. With explicitly computed $d_n^m$ and $\eta_n^m$ using the formulas above, if we apply the asymptotic estimates of Bessel functions with respect to small argument such as \eqref{eq:asympt-jh} and \eqref{eq:asympt-JH}, then the following asymptotic estimates can be obtained as $\rho\rightarrow0$
\begin{equation}\label{eq:hint-1}
\begin{split}
&\int_\rho^{2\rho}|d_n^m|^pS_n^{2p}r^{-p}|h^{(1)}_n(\omega r)|^pb^{1-p}(a+br)^2dr=O(\rho^{(n-1)p+1}),\\
&\int_\rho^{2\rho}|\eta_n^m|^pS_n^{2p}r^{-p}|j_n(\omega r)|^pb^{1-p}(a+br)^2dr=O(\rho^{(n-1)p+1}),
\end{split}\end{equation}
which formally implies 
\[\int_{B_{2/(2-\rho)}\backslash\overline{B_1}}|\hat x\cdot\tilde E^+_\rho|^pdx=O(\rho^{1-p}),\]
hence
\[\|\hat x\cdot\tilde E_\rho^+\|_{L^1(B_{2/(2-\rho)}\backslash\overline{B_1})}=O(1),\quad \|\hat x\cdot\tilde E_\rho^+\|^2_{L^2(B_{2/(2-\rho)}\backslash\overline{B_1})}=O(\rho^{-1}).\]
This suggests that the limit of $\tilde E_\rho^+$ incorporates a delta-function type jump along the radial direction from the exterior of the interface as stated in the main theorem.}

It suffices to show the proof for the electric field only, and that of magnetic field is obtained by symmetry. 

\subsection{Normal components}
Consider
\begin{equation}\label{eq:int-nE-phy}\int_{B_2\backslash\overline{B_{r_1}}}(\hat x\cdot \tilde E_\rho)\phi \,dx=\int_{B_1\backslash\overline{B_{r_1}}}(\hat x\cdot\tilde E_\rho^-)\phi \,dx+\int_{B_2\backslash \overline{B_1}}(\hat x\cdot\tilde E_\rho^+)\phi \,dx.\end{equation}
for $\phi\in H_0^1(B_2)$, which admits the spherical expansion
\begin{equation}\label{eq:SHE-phi}\phi(x)=\sum_{n=0}^\infty\sum_{m=-n}^n\phi_n^m(|x|)Y_n^m(\hat x),\quad x\in B_2,\end{equation}
and necessarily
\begin{equation}\label{eqn:phi-H1}
\sum_{n=0}^\infty\sum_{m=-n}^n\int_0^2|\phi_n^m(r)|^2rdr<\infty,\qquad \sum_{n=0}^\infty\sum_{m=-n}^n\int_0^2\left|\frac{d}{dr}{\phi_n^m}(r)\right|^2rdr<\infty.
\end{equation}

\bigskip

In the physical region $r_1<|x|<1$, by \eqref{eq:bdry-MN} and \eqref{eq:bdry-curlMN-normal}, we have
\begin{equation}\label{eq:normal-phy-E-}\hat x\cdot\tilde E_\rho^-=\eps_0^{-1/2}\sum_{n=1}^\infty\sum_{m=-n}^nS_n^2\frac{1}{|x|}\left[\beta_n^mj_n(k\omega|x|)+q_n^m h_n^{(1)}(k\omega|x|)\right]Y_n^m(\hat x).\end{equation}
Then the first integral of \eqref{eq:int-nE-phy} is given by
\[
\int_{B_1\backslash\overline{B_{r_1}}}(\hat x\cdot\tilde E_\rho^-)\phi \,dx=\int_{r_1}^1\sum_{n=1}^\infty\sum_{m=-n}^n \tilde\psi_{n,\rho}^m(\tilde r)~d\tilde r
\]
where 
\[ 
\tilde\psi_{n,\rho}^m(\tilde r):=S_n^2\eps_0^{-1/2}\left[\beta_{n}^mj_n(k\omega \tilde r)+q_n^m h_n^{(1)}(k\omega \tilde r)\right]\phi_n^m(\tilde r)\tilde r.
\] 
Moreover, we can show
\begin{lemma} Suppose that $\tilde J$ is supported in $B_{r_1}$ for $r_1<1$. Then we have
\begin{equation}\label{eq:lem1}
\lim_{\rho\rightarrow0}\int_{B_1\backslash\overline{B_{r_1}}}(\hat x\cdot\tilde E_\rho^-)\phi \,dx=
\int_{r_1}^1\sum_{n=1}^\infty\sum_{m=-n}^n \lim_{\rho\rightarrow0}\tilde\psi_{n,\rho}^m(\tilde r)~d\tilde r.
\end{equation}
\end{lemma}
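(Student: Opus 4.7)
My plan is to establish the interchange of the limit with both the integral and the double sum by a dominated convergence argument. There are two ingredients: pointwise convergence of the integrand $\tilde\psi_{n,\rho}^m(\tilde r)$ as $\rho\to 0$, and a $\rho$-uniform integrable majorant on the series $\sum_{n,m} |\tilde\psi_{n,\rho}^m(\tilde r)|$ on the interval $(r_1,1)$.

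For the pointwise convergence, I would use the asymptotic estimates \eqref{eq:asympt-t34-rho} together with \eqref{eq:coeff-gamma-eta-no-f}: since $f=0$ and $t_3=O(\rho^{2n+1})$, $t_3'=O(\rho^{n+1})$, the coefficient $\eta_n^m = -t_3'q_n^m\mathcal H_n(2\omega)/(t_3\mathcal H_n(2\omega)+\mathcal J_n(2\omega))$ is $O(\rho^{n+1})$. Combined with $t_4=O(\rho^{n+1})$ and $t_4'\to -h_n^{(1)}(k\omega)/j_n(k\omega)$, the relation \eqref{eq:coeff-c-d-alpha-beta} gives
\[
\lim_{\rho\to 0}\beta_n^m \;=\; -\frac{h_n^{(1)}(k\omega)}{j_n(k\omega)}\,q_n^m.
\]
Since $q_n^m$ is fixed (determined by $\tilde J$), this yields pointwise convergence of $\tilde\psi_{n,\rho}^m(\tilde r)$ for every $\tilde r\in(r_1,1)$ and every $n,m$.

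For the domination, I would observe that the asymptotics above also give, for all sufficiently small $\rho$, a bound $|\beta_n^m|\leq C|q_n^m|\,|h_n^{(1)}(k\omega)/j_n(k\omega)|$ with $C$ independent of $\rho$. Hence
\[
|\tilde\psi_{n,\rho}^m(\tilde r)| \;\leq\; C\,\eps_0^{-1/2}S_n^2\bigl(|q_n^m|\,|h_n^{(1)}(k\omega\tilde r)|+|\beta_n^m|\,|j_n(k\omega\tilde r)|\bigr)\,|\phi_n^m(\tilde r)|\,\tilde r.
\]
The crucial fact is that because $\tilde J$ is supported in $B_{r_1}$ with $r_1<1$, the radiating field with spherical harmonic coefficients $p_n^m,q_n^m$ is a smooth function on the closed annulus $\overline{B_1\setminus B_{r_1}}$; equivalently, the weighted coefficient sequence $S_n^2|q_n^m|\,|h_n^{(1)}(k\omega\tilde r)|$ decays rapidly in $n$ uniformly on $[r_1,1]$ (the decay overcomes the $(2n-1)!!$-type growth of $h_n^{(1)}$ at small argument). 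Coupled with the $H^1$-control \eqref{eqn:phi-H1} of the test-function coefficients $\phi_n^m$ via a Cauchy–Schwarz in $(n,m)$, this produces a $\rho$-independent function $g(\tilde r)\in L^1(r_1,1)$ dominating $\sum_{n,m}|\tilde\psi_{n,\rho}^m(\tilde r)|$.

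With pointwise convergence and an $L^1$-dominant in hand, Lebesgue's dominated convergence theorem (applied once to the counting measure on $\{(n,m)\}$ and once to $d\tilde r$, or equivalently on the product measure) yields \eqref{eq:lem1}. \textbf{The main obstacle} I anticipate is the quantitative control on the $n$-growth of $h_n^{(1)}(k\omega\tilde r)$ versus the decay of $q_n^m$ uniformly for $\tilde r\in[r_1,1]$: one has to invoke the fact that $p_n^m,q_n^m$ are the expansion coefficients of a radiation field generated by a compactly supported source away from $\partial B_1$, so the series converges absolutely on $[r_1,1]$ with rapid decay; this is what makes the $\rho$-uniform majorization possible and is the only nontrivial ingredient beyond the algebra of the asymptotics \eqref{eq:asympt-t34-rho}.
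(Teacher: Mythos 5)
Your proposal follows essentially the same route as the paper: you establish pointwise convergence of $\tilde\psi_{n,\rho}^m$ by showing $\beta_{n,\rho}^m \to -\frac{h_n^{(1)}(k\omega)}{j_n(k\omega)}q_n^m$ via the asymptotics \eqref{eq:asympt-t34-rho}, you identify the rapid decay in $n$ of $q_n^m$ (from $\tilde J$ being supported strictly inside $B_1$, so the radiated field is smooth on $\overline{B_1\setminus B_{r_1}}$) as the key uniform majorant, and you invoke dominated convergence. The paper makes the domination slightly more explicit by comparing the $\beta_{n,\rho}^m\,j_n(k\omega\tilde r)$ term directly to the fixed $q_n^m\,h_n^{(1)}(k\omega\tilde r)$ term through the ratio $\bigl|\tfrac{j_n(k\omega\tilde r)\,h_n^{(1)}(k\omega)}{j_n(k\omega)\,h_n^{(1)}(k\omega\tilde r)}\bigr|\lesssim 1$ (using monotonicity of $|h_n^{(1)}|$ for large $n$) together with the quantitative estimate $|q_n^m|\lesssim_M S_n^{-2}|h_n^{(1)}(k\omega r_1)|^{-1}(1+n)^{-M}$, rather than through a Cauchy--Schwarz in $(n,m)$ against the $H^1$ norm of $\phi$; but this is a cosmetic difference, and your identification of the compact support of $\tilde J$ as the only nontrivial ingredient is exactly right.
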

\begin{proof}

Given that the current source $\tilde J$ is supported on $B_{r_1}$, the radiation part (only depending on $J$) of $\tilde E_\rho^-$, that is,
\begin{equation}\label{eq:est-psi-q}\sum_{n=1}^\infty\sum_{m=-n}^nS_n^2\eps_0^{-1/2}q_n^mh_n^{(1)}(k\omega \tilde r)\phi_n^m(\tilde r)\tilde r\end{equation}
 is $C^\infty$ for $|x|>r_1$, hence is uniformly convergent for $\tilde r\in[r_1,1]$ and independent of $\rho$. 
Furthermore, we  \HOX{The word ``should" is removed.- Matti}have  
\begin{equation}\label{eq:est-q-1}
\big|S_n^2q_n^mh_n^{(1)}(k\omega \tilde r)\tilde r\big|
\lesssim_M (1+n)^{-M}, \qquad \hbox{for }  \tl r>r_1
\end{equation}
for arbitrary $M>0$, where $A\lesssim_MB$ represents $A\leq C_MB$ for a constant $C_M$ only depending on $M$. Using \eqref{eq:asympt-jh} (or equivalently \eqref{eq:asympt-jh-1}), let $N_1>0$ be such that $N_1>>k\omega$, then for $n>N_1$, $|h_n^{(1)}(k\omega \tilde r)|$ 
is monotonically decreasing for $\tilde r\in [r_1,1]$, implying
\begin{equation}\label{eq:est-q-2}\begin{split}
|q_n^m|&\lesssim_M S_n^{-2}\frac{1}{|h_n^{(1)}(k\omega r_1)|}(1+n)^{-M}.
\end{split}\end{equation}

By \eqref{eq:coeff-c-d-alpha-beta} and \eqref{eq:coeff-gamma-eta-no-f} we have
\[\beta_{n,\rho}^m=\frac{(t_3t_4'-t_4t_3')\mathcal{H}_n(2\omega)+t_4'\mathcal J_n(2\omega)}{t_3\mathcal H_n(2\omega)+\mathcal{J}_n(2\omega)}q_n^m.\]
From \eqref{eq:asympt-t34-rho}, {\color{black}for $n\geq 0$}
\[\color{black}(t_3t_4'-t_4t_3')
= -i\pi\frac{n+1}{n\Gamma(n+1/2)\Gamma(n+3/2)}\frac{h_n^{(1)}(k\omega)}{j_n(k\omega)}\left(\frac{\omega}{2}\right)^{2n+1}\rho^{2n+1}\left(1+O_{\rho\rightarrow0}(\rho)\right),
\]
which implies {\color{black} for $n>0$
\[\beta_{n,\rho}^m= -\frac{h^{(1)}_n(k\omega)}{j_n(k\omega)}q_n^m\left(1+O_{\rho\rightarrow0}(\rho)\right).\]
Therefore,
\begin{equation}\label{eqn:beta-0}
\beta_{n,0}^m=\displaystyle\lim_{\rho\rightarrow0}\beta_{n,\rho}^m=-\frac{h^{(1)}_n(k\omega)}{j_n(k\omega)}q_n^m\end{equation}
and the convergence is uniform.}
As a consequence, for $\rho<<1<<N_1<n$ 
and $\tilde r\in[r_1,1]$, the other term of $\tl \psi_{n,\rho}^m$ can be bounded by
\[\begin{split}\big|S_n^2\eps_0^{-1/2}\beta_{n,\rho}^mj_n(k\omega \tilde r)\phi_n^m(\tilde r)\tilde r\big|=&\left|\frac{\beta_{n,\rho}^mj_n(k\omega \tilde r)}{q_n^mh_n^{(1)}(k\omega\tl r)}\right|\big|S_n^2\eps_0^{-1/2}q_n^mh_n^{(1)}(k\omega\tl r)\phi_n^m(\tilde r)\tilde r\big|\\
\lesssim & \left|\frac{j_n(k\omega \tilde r)h^{(1)}_n(k\omega)}{j_n(k\omega)h_n^{(1)}(k\omega\tl r)}\right|\big|S_n^2\eps_0^{-1/2}q_n^mh_n^{(1)}(k\omega\tl r)\phi_n^m(\tilde r)\tilde r\big|\\
\lesssim & \big|S_n^2\eps_0^{-1/2}q_n^mh_n^{(1)}(k\omega\tl r)\phi_n^m(\tilde r)\tilde r\big|.
\end{split}\]
Therefore, the series converges uniformly (independent of $\rho$) and by dominated convergence theorem the lemma is proved.
\end{proof}

\begin{remark}\label{rmk:normal-int}
The proof actually shows that 
\[\lim_{\rho\rightarrow0}\int_{B_1\backslash\overline{B_{r_1}}}(\hat x\cdot\tilde E_\rho^-)\phi \,dx=\int_{r_1}^1(\hat x\cdot\tilde E_0^-)\phi~dx\]
where 
\[\hat x\cdot\tilde E_0^-(x):=\eps_0^{-1/2}\sum_{n=1}^\infty\sum_{m=-n}^n S_n^2\frac{1}{|x|}\left[\beta_{n,0}^mj_n(k\omega |x|)+q_n^m h_n^{(1)}(k\omega |x|)\right]Y_n^m(\hat x).\]
By the uniform convergence of \eqref{eqn:beta-0} and estimate \eqref{eq:est-q-2}, we have 
\[\hat x\cdot\tl E_0^-|_{\partial B_1^-}=0.\]
Notice that the medium inside $B_1$ is regular and $\tl J$ is supported away from $\partial B_1$, this proves \eqref{eqn:interior-bdry} in Theorem \ref{thm:main}, which is consistent to Weder's definition \cite{We1}.
\end{remark}

\bigskip

Now we focus on the second integral of \eqref{eq:int-nE-phy}. 
First of all,
\begin{equation}\begin{split}
\int_{B_2\backslash\overline{B_1}}(\hat x\cdot \tilde E_\rho^+)\phi~dx
=&\int_{B_2\backslash\overline{B_\rho}}\frac{1}{b}(\hat y\cdot E_\rho^+(y))\phi\left(F_\rho(y)\right)|DF_\rho(y)|~dy\\
=&\int_\rho^2 \sum_{n=1}^\infty\sum_{m=-n}^n \psi_{n,\rho}^m(r)~dr
\end{split}\end{equation}
where
\[\psi_{n,\rho}^m(r):=S_n^2\left[d_{n,\rho}^mh_n^{(1)}(\omega r)+\eta_{n,\rho}^mj_n(\omega r)\right]\phi_n^m(a+br)~\frac{(a+br)^2}{r}\]
and $d_{n,\rho}^m:=d_n^m$ and $\eta_{n,\rho}^m:=\eta_n^m$ to indicate the $\rho$ dependence.
We separate the series into two parts: let $N_2>0$ be such that $\max\{2\omega, k\omega\}<< N_1<N_2$ (as defined in \eqref{eq:asympt-jh}) and consider
\[ 
I_1(\rho):=\int_\rho^2\sum_{n=1}^{N_2}\sum_{m=-n}^n \psi_{n,\rho}^m(r)~dr,\qquad
I_2(\rho):=\int_\rho^2\sum_{n=N_2+1}^\infty\sum_{m=-n}^n \psi_{n,\rho}^m(r)~dr.
\] 

First we have
\begin{lemma}\label{lem:finite-terms}
\begin{equation}\label{eq:lem2}
\lim_{\rho\rightarrow0}I_1(\rho)=\sum_{n=1}^{N_2}\sum_{m=-n}^nS_n^2\frac{\mu_0^{1/2}\left[\mathcal J_n(k\omega)h_n^{(1)}(k\omega)-\mathcal{H}_n(k\omega)j_n(k\omega)\right]}{kn(n+1)j_n(k\omega)}q_n^m\phi_n^m(1).
\end{equation}
\end{lemma}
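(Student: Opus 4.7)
The plan is to reduce the statement to a pointwise limit computation for each of the finitely many terms in $I_1(\rho)$ and then apply dominated convergence.  Since the $n$-sum in $I_1(\rho)$ truncates at $N_2$, interchanging the $\rho\to 0$ limit with the (finite) double sum over $(n,m)$ is free, so it suffices to show, for every fixed $1\le n\le N_2$ and $|m|\le n$, that
\[
\lim_{\rho\to 0}\int_\rho^2 \psi_{n,\rho}^m(r)\,dr
= \frac{S_n^2\,\mu_0^{1/2}\bigl[\mathcal J_n(k\omega)h_n^{(1)}(k\omega)-\mathcal H_n(k\omega)j_n(k\omega)\bigr]}{k\,n(n+1)\,j_n(k\omega)}\,q_n^m\,\phi_n^m(1).
\]

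The geometric intuition is that only a thin boundary layer $r=O(\rho)$ contributes in the limit, so I would rescale $r=\rho s$, $dr=\rho\,ds$, giving
\[
\int_\rho^2\psi_{n,\rho}^m(r)\,dr
=S_n^2\int_1^{2/\rho}\Bigl[d_{n,\rho}^m h_n^{(1)}(\omega\rho s)+\eta_{n,\rho}^m j_n(\omega\rho s)\Bigr]\phi_n^m(a+b\rho s)\,\frac{(a+b\rho s)^2}{s}\,ds.
\]
For the coefficients I would use \eqref{eq:coeff-c-d-alpha-beta} together with \eqref{eq:asympt-t34-rho} to read off that $\eta_{n,\rho}^m=O(\rho^{n+1})$ and $d_{n,\rho}^m=t_3' q_n^m+O(\rho^{3n+2})=O(\rho^{n+1})$ with an explicit leading constant determined by $t_3'$.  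Combining this with the standard small-argument asymptotics $j_n(z)\sim z^n/(2n+1)!!$ and $h_n^{(1)}(z)\sim -i(2n-1)!!/z^{n+1}$, and using $(2n-1)!!=2^n\Gamma(n+1/2)/\sqrt\pi$, the powers of $\omega$, $\rho$ and $2$ cancel and the integrand converges pointwise in $s\in[1,\infty)$ to
\[
S_n^2\,\frac{\mu_0^{1/2}\bigl[\mathcal J_n(k\omega)h_n^{(1)}(k\omega)-\mathcal H_n(k\omega)j_n(k\omega)\bigr]}{k\,n\,j_n(k\omega)}\,q_n^m\,\phi_n^m(1)\,\frac{1}{s^{n+2}},
\]
while the $\eta_{n,\rho}^m j_n(\omega\rho s)$ contribution vanishes pointwise.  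The identity $\int_1^\infty s^{-n-2}\,ds=1/(n+1)$ then produces exactly the factor $1/(n(n+1))$ appearing on the right-hand side.

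The step that needs the most care is dominated convergence after the rescaling, since the upper limit $2/\rho$ tends to infinity.  I would split the $s$-interval at $s_*(\rho):=1/(\omega\rho)$: on $[1,s_*]$ we have $\omega\rho s\le 1$, so the small-argument bounds give $|d_{n,\rho}^m h_n^{(1)}(\omega\rho s)|\lesssim s^{-(n+1)}$ and $|\eta_{n,\rho}^m j_n(\omega\rho s)|\lesssim\rho^{2n+1}s^n$, yielding the integrable bound $Cs^{-(n+2)}$ after the factor $1/s$.  On $[s_*,2/\rho]$ the Hankel function is $O(1)$ and the factor $d_{n,\rho}^m=O(\rho^{n+1})$ together with $\int_{s_*}^{2/\rho} ds/s=O(1)$ shows the piece is $O(\rho^{n+1})\to 0$.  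The continuity of $\phi_n^m$ at $r=1$ that is needed to pass $\phi_n^m(a+b\rho s)\to\phi_n^m(1)$ follows from $\phi\in H_0^1(B_2)$ and the trace on $\partial B_1$.

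The main obstacle, as just sketched, is therefore not the formal asymptotic identification but the uniform-in-$\rho$ control of the Bessel factors across the full range $[1,2/\rho]$ and the simultaneous control of $\phi_n^m$ in an $L^\infty$-type norm sufficient to beat the $1/s$ factor.  Once this is in place, the stated identity follows by summing the contributions over $1\le n\le N_2$ and $|m|\le n$.
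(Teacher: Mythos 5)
Your proposal is correct, and the limiting value you compute matches the paper's. But you take a genuinely different technical route. The paper splits the $r$-integral at a fixed intermediate cutoff $\rho_1$, shows the outer piece vanishes directly from $d_{n,\rho}^m=O(\rho^{n+1})$, and for the inner piece performs an integration by parts against $A_n(r)=\int_r^{\rho_1}h_n^{(1)}(\omega s)(a+bs)^2 s^{-1}\,ds$: the boundary term $d_{n,\rho}^m\phi_n^m(1)A_n(\rho)$ carries the whole limit (since $a+b\rho=1$ exactly) while the remainder involving ${\phi_n^m}'$ is killed by Cauchy--Schwarz in $H^1$. You instead rescale $r=\rho s$ and apply dominated convergence. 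Both approaches exploit the same asymptotics of $d_{n,\rho}^m$ (via $t_3'$) and $h_n^{(1)}$, and both recover $\phi_n^m(1)$, but the paper does so through the IBP boundary term while you use the pointwise continuity of $\phi_n^m$ at $r=1$ (legitimate via the one-dimensional $H^1$ embedding). Your $\int_1^\infty s^{-(n+2)}ds=1/(n+1)$ step is a transparent alternative to the paper's explicit computation of $A_n(\rho)\sim -\tfrac{i\Gamma(n+1/2)}{2\sqrt\pi(n+1)}(\tfrac 2\omega)^{n+1}\rho^{-(n+1)}$. One caution about your domination: the bound for the $\eta$-contribution is not simply $s^{-(n+2)}$ after inserting the small-argument bound for $j_n$ — the raw bound is $\rho^{2n+1}s^{n-1}$ — but on the support of the rescaled integrand one has $\rho\le 2/s$, which converts it to $C s^{-(n+2)}$; make this implicit constraint explicit. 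Your observation that $d_{n,\rho}^m=t_3'q_n^m+O(\rho^{3n+2})$ is quantitatively correct and consistent with \eqref{eq:asympt-d-eta-rho}. The paper's IBP route has the advantage of staying close to the $H^1$ structure of $\phi$ and reusing $A_n$, $B_n$ in the subsequent tail Lemma \ref{lem:infinite-terms}; the rescaling route is geometrically more transparent but would require $n$-uniform domination if one wanted to extend it to the infinite tail.
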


\begin{proof}
We first consider the limit
\begin{equation}\label{eq:pf-I1-1}\begin{split}&~\lim_{\rho\rightarrow0}\int_\rho^2\sum_{n=1}^{N_2}\sum_{m=-n}^nS_n^2\eta_{n,\rho}^mj_n(\omega r)\phi_n^m(a+br)\frac{(a+br)^2}{r}~dr\\
&~\quad=\sum_{n=1}^{N_2}\sum_{m=-n}^nS_n^2\lim_{\rho\rightarrow0}\eta_{n,\rho}^m\int_\rho^2j_n(\omega r)\phi_n^m(a+br)\frac{(a+br)^2}{r}~dr.\end{split}\end{equation}
From \eqref{eq:coeff-gamma-eta-no-f}, \eqref{eq:coeff-c-d-alpha-beta} and \eqref{eq:asympt-t34-rho}, for $\rho<<1$ and $0\leq n\leq N_2$, 
\begin{equation}\label{eq:asympt-d-eta-rho}
\begin{split}
|\eta_n^m|=&\left|\frac{-t_3'\mathcal{H}_n(2\omega)q_n^m}{t_3\mathcal H_n(2\omega)+\mathcal{J}_n(2\omega)}\right|
\lesssim_{N_2}|q_n^m|\rho^{n+1},\\
|d_n^m|=&\left|\frac{t_3'\mathcal{J}_n(2\omega)q_n^m}{t_3\mathcal H_n(2\omega)+\mathcal{J}_n(2\omega)}\right|
\lesssim_{N_2}|q_n^m|\rho^{n+1}.
\end{split}\end{equation}
Therefore, by Cauchy-Schwartz and \eqref{eqn:phi-H1}
\[\Big|\eta_{n,\rho}^m\int_\rho^2j_n(\omega r)\phi_n^m(a+br)\frac{(a+br)^2}{r}~dr\Big|
\lesssim \rho^{n+1/2}\rightarrow0,\qquad\mbox{as }\;\rho\rightarrow0.\]
This implies that the limit \eqref{eq:pf-I1-1} is 0.\\

To take care of the other terms in $I_1$, we choose $1>>\rho_1>\rho$ and consider
\[\sum_{n=1}^{N_2}\sum_{m=-n}^nS_n^2\lim_{\rho\rightarrow0}d_{n,\rho}^m\left(\int_\rho^{\rho_1}+\int_{\rho_1}^2\right)h_n^{(1)}(\omega r)\phi_n^m(a+br)\frac{(a+br)^2}{r}~dr.\]
By \eqref{eq:asympt-d-eta-rho}, it is easy to see 
\[\sum_{n=1}^{N_2}\sum_{m=-n}^nS_n^2\lim_{\rho\rightarrow0}d_{n,\rho}^m\int_{\rho_1}^2h_n^{(1)}(\omega r)\phi_n^m(a+br)\frac{(a+br)^2}{r}~dr=0.\]
To see that the terms integrated on $(\rho,\rho_1)$ converge to the right hand side of \eqref{eq:lem2}, 
we first apply integration by parts
\[\begin{split}
d_{n,\rho}^m\int_\rho^{\rho_1}h_n^{(1)}(\omega r)\phi_n^m(a+br)\frac{(a+br)^2}{r}~dr=&d_{n,\rho}^m\phi_n^m(1)A_n(\rho)\\
&-d_{n,\rho}^m\int_\rho^{\rho_1}{\phi_n^m}'(a+br)bA_n(r)~dr
\end{split}\]
where 
\[A_n(r):=\int_r^{\rho_1} h_n^{(1)}(\omega s)(a+bs)^2s^{-1}~ds.\]
For $0\leq n\leq N_2$, and $r<\rho_1<<1$
\[\begin{split}
A_n(r)=&~-\frac{i}{2\sqrt\pi}\Gamma(n+1/2)\left(\frac{2}{\omega}\right)^{n+1}\int_r^{\rho_1}(a+bs)^2s^{-(n+2)}\big(1+O(s)\big)~ds\\
&=-\frac{i}{2\sqrt\pi}\Gamma(n+1/2)\left(\frac{2}{\omega}\right)^{n+1}\frac{a^2}{n+1}r^{-(n+1)}\big(1+O_{r\rightarrow0}(r^{-n})\big).
\end{split}\]
Therefore, as $\rho\rightarrow0$, 
\[A_n(\rho)=-\frac{i\Gamma(n+1/2)}{2\sqrt\pi(n+1)}\left(\frac{2}{\omega}\right)^{n+1}\rho^{-(n+1)}\left(1+O_{\rho\rightarrow0}(\rho)\right)\]
and by \eqref{eq:asympt-d-eta-rho}
\[\begin{split}
\left|d_{n,\rho}^m\int_\rho^{\rho_1}{\phi_n^m}'(a+br)bA_n(r)~dr\right|&~\lesssim ~ \left(\int_1^2|{\phi_n^m}'(\tl r)|^2\tl r~d\tl r\right)^{1/2}\left(\int_\rho^{\rho_1}|A_n(r)|^2\frac{b}{a+br}~dr\right)^{1/2}\\
&~\lesssim_\phi~ \rho^{1/2}\rightarrow0\qquad \mbox{ as }\;\rho\rightarrow0.
\end{split}\]
Together we have
\[\lim_{\rho\rightarrow0}I_1(\rho)=\sum_{n=1}^{N_2}\sum_{m=-n}^nS_n^2\{\lim_{\rho\rightarrow0}d_{n,\rho}^mA_n(\rho)\}\phi_n^m(1).\]
From \eqref{eq:asympt-d-eta-rho} and \eqref{eq:asympt-t34-rho}, we further have as $\rho\rightarrow0$, 
\begin{equation}\label{eq:dnm-asympt}d_{n,\rho}^m
=  \frac{2i\sqrt\pi\left[\mathcal{J}_n(k\omega)h^{(1)}_n(k\omega)-\mathcal{H}_n(k\omega)j_n(k\omega)\right]}{\Gamma(n+1/2)\mu_0^{-1/2}knj_n(k\omega)}\left(\frac{\omega}{2}\right)^{n+1}q_n^m\rho^{n+1}\left(1+O_{\rho\rightarrow0}(\rho)\right),
\end{equation}
implying 
 \[\lim_{\rho\rightarrow0}d_{n,\rho}^mA_n(\rho)=\frac{\mu_0^{1/2}\left[\mathcal{J}_n(k\omega)h^{(1)}_n(k\omega)-\mathcal{H}_n(k\omega)j_n(k\omega)\right]}{kn(n+1)j_n(k\omega)}q_n^m. \]
Therefore, \eqref{eq:lem2} is proved.
\end{proof}

Before considering the tail term $I_2(\rho)$, let us define 
\[B_n(r):=\int_r^2h_n^{(1)}(\omega s)(a+bs)^2s^{-1}~ds,\quad r>\rho.\]
for $n>N_2$ (so we can use \eqref{eq:asympt-jh-1}), similar to $A_n(r)$, we have 
\begin{equation}\label{eqn:pf-lem-I2-Bnrho}
B_n(\rho)=
-\frac{i\Gamma(n+1/2)}{2\sqrt\pi(n+1)}\left(\frac{2}{\omega}\right)^{n+1}\rho^{-(n+1)}\left(1+O_{\rho\rightarrow0}(\rho)\right),
\end{equation}
and
\begin{equation}\label{eqn:pf-lem-I2-Bnr}
B_n(r)=-\frac{i}{2\sqrt\pi}\Gamma(n+1/2)\left(\frac{2}{\omega}\right)^{n+1}\frac{a^2}{n+1}r^{-(n+1)}\big(1+O_{r\rightarrow0}(r^{-n})\big)
\end{equation}
for $n>N_2$. 
\begin{lemma}\label{lem:infinite-terms}
\begin{equation}\label{eq:lem-I2}\begin{split}
\lim_{\rho\rightarrow0}I_2(\rho)=&\sum_{n=N_2+1}^\infty\sum_{m=-n}^nS_n^2\frac{\mu_0^{1/2}\left[\mathcal J_n(k\omega)h_n^{(1)}(k\omega)-\mathcal{H}_n(k\omega)j_n(k\omega)\right]}{kn(n+1)j_n(k\omega)}q_n^m\phi_n^m(1).
\end{split}\end{equation}
\end{lemma}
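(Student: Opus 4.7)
The plan is to repeat, uniformly in $n$, the two-part analysis from the proof of Lemma~\ref{lem:finite-terms} and to justify interchanging $\displaystyle\lim_{\rho \to 0}$ with the infinite sum by dominated convergence in the discrete variables $(n,m)$. I would begin by splitting each summand into the $\eta_{n,\rho}^m j_n$-piece and the $d_{n,\rho}^m h_n^{(1)}$-piece. For the $\eta$-piece, the estimate $|\eta_{n,\rho}^m| \lesssim |q_n^m|\rho^{n+1}$ from (\ref{eq:asympt-d-eta-rho})---which extends to $n > N_2$ via (\ref{eq:asympt-t34-rho})---together with the uniform control $|j_n(\omega r)| \leq |j_n(2\omega)|$ on $[\rho, 2]$, Cauchy--Schwarz, and (\ref{eqn:phi-H1}), yields a termwise bound of order $|q_n^m|\rho^{n+1/2}$, which vanishes as $\rho \to 0$.

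For the $d_{n,\rho}^m h_n^{(1)}$-piece, I would integrate by parts using $-B_n'(r) = h_n^{(1)}(\omega r)(a+br)^2/r$. Because $\phi \in H_0^1(B_2)$ gives $\phi_n^m(2) = 0$ and $B_n(2) = 0$, the only surviving boundary contribution is at $r = \rho$, and since $a + b\rho = 1$ it is precisely $d_{n,\rho}^m \phi_n^m(1) B_n(\rho)$. Combining the asymptotic (\ref{eqn:pf-lem-I2-Bnrho}) for $B_n(\rho)$ with (\ref{eq:dnm-asympt}) for $d_{n,\rho}^m$ (valid for $n > N_2$ by (\ref{eq:asympt-t34-rho})), the $\Gamma(n+1/2)$ and $(\omega/2)^{n+1}$ factors cancel exactly, leaving
\[
\lim_{\rho \to 0} d_{n,\rho}^m B_n(\rho) = \frac{\mu_0^{1/2}\left[\mathcal{J}_n(k\omega) h_n^{(1)}(k\omega) - \mathcal{H}_n(k\omega) j_n(k\omega)\right]}{k n (n+1) j_n(k\omega)} q_n^m,
\]
which is precisely the summand appearing on the right-hand side of (\ref{eq:lem-I2}). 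The remainder integral $d_{n,\rho}^m \int_\rho^2 {\phi_n^m}'(a+br)\, b\, B_n(r)\, dr$ is dispatched by Cauchy--Schwarz as in Lemma~\ref{lem:finite-terms}: the pointwise estimate (\ref{eqn:pf-lem-I2-Bnr}) bounds $\int_\rho^2 |B_n(r)|^2\, b(a+br)^{-1} dr$ by $\rho^{-(2n+1)}$ up to Gamma factors, and multiplying by $|d_{n,\rho}^m|^2 \lesssim |q_n^m|^2 \rho^{2(n+1)}$ yields a termwise $O(\rho^{1/2})$ contribution.

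The main obstacle is producing the $(n,m)$-summable, $\rho$-uniform dominating bound required to swap limit and sum. The rapid decay (\ref{eq:est-q-2}) of $q_n^m$---available because $\tilde J$ is compactly supported in $B_{r_1}$, away from $\partial B_1$---is of arbitrary polynomial order, but the individual asymptotics for $d_{n,\rho}^m$ and $B_n(\rho)$ each contain large factors $\Gamma(n+1/2)$ and $j_n(k\omega)^{-1}$ that must not be allowed to spoil the summability. The critical observation is that these factors cancel exactly in the product $d_{n,\rho}^m B_n(\rho)$, so what remains in the boundary-term limit is $q_n^m/j_n(k\omega)$ times bounded $n$-dependent coefficients. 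Combined with the trace-type estimate $\sum_{n,m} n |\phi_n^m(1)|^2 \lesssim \|\phi\|_{H^1(B_2)}^2$ following from (\ref{eqn:phi-H1}), this yields absolute summability of the limiting series and a dominating function uniform in small $\rho$, completing the dominated-convergence step and establishing (\ref{eq:lem-I2}).
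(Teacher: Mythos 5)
Your proposal follows essentially the same route as the paper's proof: the same split of each summand into an $\eta_{n,\rho}^m j_n$-piece and a $d_{n,\rho}^m h_n^{(1)}$-piece, the same integration by parts using $B_n$, the same identification $\lim_{\rho\to 0} d_{n,\rho}^m B_n(\rho)$ via cancellation of the $\Gamma(n+1/2)$ and $(\omega/2)^{n+1}$ factors, and dominated convergence keyed to the rapid decay of $q_n^m$ coming from $\operatorname{supp}\tilde J\subset B_{r_1}$. One imprecision worth fixing: for $n>N_2$ you cannot cite $|\eta_{n,\rho}^m|\lesssim |q_n^m|\rho^{n+1}$ from \eqref{eq:asympt-d-eta-rho} with an $n$-uniform constant; the correct large-$n$ estimate is \eqref{eq:eta-d-n-rho}, which carries an extra $\Gamma(n+1/2)\Gamma(n+3/2)\omega^{-n}(k\omega)^{-(n+1)}$ factor, and it is only after pairing this with the $1/|h_n^{(1)}(k\omega r_1)|$ and $(1+n)^{-M}$ decay in \eqref{eq:est-q-2} (and the $1/\Gamma(n+3/2)$ from $j_n(\omega r)$) that one obtains a $\rho$-uniform summable majorant for the $\eta$-piece, exactly as in the paper's bound for $\Psi_{1,n,\rho}^m$.
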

\begin{remark}\label{rmk:delta-strength}
Along with Lemma \ref{lem:finite-terms}, this shows that the limit of the normal component of the exterior field is some function (or distribution) times $\delta(\tilde r-1)$. The smoothness of this function (the strength of the delta singularity) is estimated by the growth of the coefficient with respect to $n>> 1$. For $n\geq N_2$, by \eqref{eq:asympt-jh}, \eqref{eq:asympt-JH} and \eqref{eq:est-q-2}, we have
\[\begin{split}
&\left|S_n^2\frac{\mu_0^{1/2}\left[\mathcal J_n(k\omega)h_n^{(1)}(k\omega)-\mathcal{H}_n(k\omega)j_n(k\omega)\right]}{kn(n+1)j_n(k\omega)}\right|
\\
&=
\frac{\mu_0^{1/2}\frac{(2n+1)\Gamma(n+1/2)}{2\Gamma(n+3/2)}(k\omega)^{-1}}{kn(n+1)\frac{\sqrt\pi}{2\Gamma(n+1/2)}\left(\frac{k\omega}{2}\right)^n}S_n^2|q_n^m|
\left(1+O_{n\rightarrow \infty }(\frac 1n)\right)
\lesssim_M \frac{2n+1}{kn(n+1)}r_1^{n+1}(1+n)^{-M}
\end{split}
\]
for any $M>0$. 
\end{remark}

\begin{proof}[Proof of Lemma \ref{lem:infinite-terms}]
Taking into account the $n$-dependence, we have from \eqref{eq:coeff-gamma-eta-no-f}, \eqref{eq:coeff-c-d-alpha-beta} and \eqref{eq:asympt-t34-rho},
\begin{equation}\label{eq:eta-d-n-rho}
|\eta_{n,\rho}^m|
\lesssim ~\Gamma(n+1/2)\Gamma(n+3/2)\omega^{-n}|q_n^m|\left(\frac{\rho}{k\omega}\right)^{n+1},
\quad
|d_{n,\rho}^m|
\lesssim ~ |q_n^m|\left(\frac{\rho}{k}\right)^{n+1}
\end{equation}
where the general constants associated to $\lesssim$ are independent of $n$.

First to show
\begin{equation}\label{eq:NtInfty-int-1}
I_2(\rho)=\sum_{n=N_2+1}^\infty\sum_{m=-n}^n \Psi_{n,\rho}^m,\qquad\Psi_{n,\rho}^{m}:=\int_{\rho}^{2}\psi_{n,\rho}^{m}(r)~dr,
\end{equation}
by \eqref{eq:eta-d-n-rho} and \eqref{eq:asympt-jh}, we have for $r\in[\rho,2]$ and $n>N_2$
\[\left|d_{n,\rho}^mh_n^{(1)}(\omega r)+\eta_{n,\rho}^mj_n(\omega r)\right|
\lesssim \Gamma(n+1/2)\left(\frac{2}{k\omega}\right)^{n+1}|q_n^m|\rho^{n+1}r^{-(n+1)}.\]
By the estimates \eqref{eq:est-q-2} for $q_{n}^{m}$ and $N_1<N_2$, 
\[|h_n^{(1)}(k\omega r_1)|\gtrsim \Gamma(n+1/2)\left(\frac{2}{k\omega r_1}\right)^{n+1},\]
we have
\[\begin{split}
\int_\rho^2|\psi_{n,\rho}^{m}(r)|~dr &~\lesssim S_{n}^{2}\Gamma(n+1/2)\left(\frac{2}{k\omega}\right)^{n+1}|q_{n}^{m}|\rho^{n+1}\int_\rho^2|\phi_{n}^{m}(a+br)|r^{-n-2}~dr\\
&~\lesssim_M (1+n)^{-M}r_1^{n+1}\rho^{-1/2},
\end{split}\]
implying that the series is uniformly convergent. So \eqref{eq:NtInfty-int-1} is valid.\\

To show 
\begin{equation}\label{eq:NtInfty-int-2}
\lim_{\rho\rightarrow0}\sum_{n=N_2+1}^\infty\sum_{m=-n}^n\Psi_{n,\rho}^m=\sum_{n=N_2+1}^\infty\sum_{m=-n}^n\lim_{\rho\rightarrow0}\Psi_{n,\rho}^m
\end{equation}
we write for $n\geq N_2$
\[\begin{split}\Psi_{n,\rho}^{m}
=&~S_{n}^{2}\eta_{n,\rho}^{m}\int_{\rho}^{2} j_n(\omega r)\phi_n^m(a+br)(a+br)^2r^{-1}~dr\\
&~+S_n^2d_{n,\rho}^m\int_\rho^2 h_n^{(1)}(\omega r)\phi_n^m(a+br)(a+br)^2r^{-1}~dr\\
:=&~ \Psi_{1,n,\rho}^m+\Psi_{2,n,\rho}^m.
\end{split}\]
By \eqref{eq:eta-d-n-rho} and \eqref{eq:asympt-jh}, we have
\[\begin{split}|\Psi_{1,n,\rho}^m|\lesssim &~S_n^2|q_n^m|\Gamma(n+1/2)\left(\frac{\rho}{2k\omega}\right)^{n+1}\int_{\rho}^2r^{n-1}|\phi_n^m(a+br)|(a+br)^2~dr\\
\lesssim &~S_n^2|q_n^m|\frac{\Gamma(n+1/2)}{2n-1}\left(\frac{\rho}{2k\omega}\right)^{n+1}\big(2^n+O(\rho^{n-1/2})\big)\\
\lesssim &~S_n^2|q_n^m|\frac{\Gamma(n+1/2)}{n}\left(\frac{\rho}{k\omega}\right)^{n+1}
\end{split}\]
where the general constants are independent of $n>N_2$ and $\rho$.
The right hand side is summable with respect to $n$ uniformly in $\rho$, using \eqref{eq:est-q-2}. Moreover, it converges to 0 as $\rho\rightarrow0$. Therefore,
\begin{equation}\label{eq:exch-Psi-1}\lim_{\rho\rightarrow0}\sum_{n=N_2+1}^\infty\sum_{m=-n}^n\Psi_{1,n,\rho}^m=\sum_{n=N_2+1}^\infty\sum_{m=-n}^n\lim_{\rho\rightarrow0}\Psi_{1,n,\rho}^m=0.\end{equation}
For the second term, by the integration by parts, we have 
\[\Psi_{2,n,\rho}^m=S_n^2d_{n,\rho}^m\left\{\phi_n^m(1)B_n(\rho)-\int_\rho^2{\phi_n^m}'(a+br)bB_n(r)~dr\right\}.\]
Combining \eqref{eq:eta-d-n-rho} and \eqref{eq:asympt-jh}, 
we obtain 
\[\begin{split}
\left|S_n^2d_{n,\rho}^mB_n(\rho)\phi_n^m(1)\right|&\lesssim S_n^2\left(\frac{2}{k\omega}\right)^{n+1}\frac{\Gamma(n+1/2)}{n+1}|q_n^m\phi_n^m(1)|\\
&\lesssim \left(\frac{2}{k\omega}\right)^{n+1}\frac{\Gamma(n+1/2)}{n+1}\frac{1}{|h_n^{(1)}(k\omega)|}|S_n^2 q_n^m h_n^{(1)}(k\omega)\phi_n^m(1)|\\
&\lesssim~ |S_n^2 q_n^m h_n^{(1)}(k\omega)\phi_n^m(1)|
\end{split}\]
which is summable by that of \eqref{eq:est-psi-q}.
Also, by \eqref{eq:eta-d-n-rho}, \eqref{eqn:pf-lem-I2-Bnr} and \eqref{eq:est-q-1}, one has
\[\begin{split}
&\left|S_n^2d_{n,\rho}^m\int_\rho^2{\phi_n^m}'(a+br)bB_n(r)~dr\right|\lesssim~S_n^2|q_n^m|\left(\frac{\rho}{k}\right)^{n+1}\frac{\Gamma(n+1/2)}{n+1}\left(\frac{2}{\omega}\right)^{n+1}\rho^{-(n+1/2)}\\
&\lesssim~S_n^2|q_n^m|\frac{\Gamma(n+1/2)}{n+1}\left(\frac{2}{k\omega}\right)^{n+1}\rho^{1/2}\lesssim_M~(1+n)^{-(M+1)}\rho^{1/2}
\end{split}\]

By Lebesgue dominated convergence theorem,  
we have 
\[\lim_{\rho\rightarrow0}\sum_{n=N_2+1}^\infty\sum_{m=-n}^n\Psi_{2,n,\rho}^m=\sum_{n=N_2+1}^\infty\sum_{m=-n}^n\lim_{\rho\rightarrow0}\Psi_{2,n,\rho}^m=\sum_{n=N_2+1}^\infty\sum_{m=-n}^nS_n^2B_n^m\phi_n^m(1)\]
where 
\[\begin{split}B_n^m:=&\lim_{\rho\rightarrow0}d_{n,\rho}^mB_n(\rho)\\
=&~\frac{2i\sqrt\pi\left[\mathcal{J}_n(k\omega)h^{(1)}_n(k\omega)-\mathcal{H}_n(k\omega)j_n(k\omega)\right]}{\Gamma(n+1/2)\mu_0^{-1/2}knj_n(k\omega)}\left(\frac{\omega}{2}\right)^{n+1}q_n^m\lim_{\rho\rightarrow0}[\rho^{n+1}B_n(\rho)]
\end{split}\]
by \eqref{eq:dnm-asympt}, which proves the lemma by \eqref{eqn:pf-lem-I2-Bnrho}.
\end{proof}

\begin{remark}\label{rmk:tang_cond}
{Using the above analysis, we can  
calculate the limit of the tangential components of the fields when we are cloaking an active source.
Thus, one can obtain explicit representations for the the limits of right hand sides of equations \eqref{eq:coeff-sys-2} and \eqref{eq:coeff-sys-3}.}
For example, we have
\[T_{n,m}^{(1)}(k,\omega,J):=\lim_{\rho\rightarrow0}\beta_{n,\rho}^m\mathcal{J}_n(k\omega)+q_n^m\mathcal{H}_n(k\omega)=\beta_{n,0}^m\mathcal{J}_n(k\omega)+q_n^m\mathcal{H}_n(k\omega)\neq0\]
where $\beta_{n,0}^m=-\frac{h^{(1)}_n(k\omega)}{j_n(k\omega)}q_n^m$ as in \eqref{eqn:beta-0}. Notice the convergence is uniform in $n$. Therefore, we have
\begin{equation}\lim_{\rho\rightarrow0}\hat x\times\tilde E_\rho^{\pm}|_{\partial B_1}=\eps_0^{-1/2}\sum_{n=1}^\infty\sum_{m=-n}^n S_n(T^{(1)}_{n,m}V_n^m+T^{(2)}_{n,m}U_n^m)\end{equation}
where 
\[T_{n,m}^{(2)}(k,\omega,J):=\lim_{\rho\rightarrow0}\alpha_{n,\rho}^m j_n(k\omega)+p_n^m h^{(1)}_n(k\omega)\]
Similarly, we can obtain the tangential magnetic boundary condition at the interface as well. 
\end{remark}

\appendix
\section{Spherical harmonics and Bessel functions}\label{appendix:A}

Our arguments rely heavily on expanding the EM fields into series of
spherical wave functions. To that end, we introduce for
$n\in\mathbb{Z}^+$ and $m\in\mathbb{Z}$,
\begin{equation}\label{eqn:vec-base}\begin{split}
M_{n,\omega}^m(x):=&~\nabla\times\{xj_n(\omega|x|)Y_n^m(\hat{x})\},\\
N_{n,\omega}^m(x):=&~\nabla\times\{xh_n^{(1)}(\omega|x|)Y_n^m(\hat{x})\},
\end{split}\end{equation}
where $\omega\in\R$ and $\hat{x}=x/|x|$
for $x\in\R^3$. Here, $Y_n^m(\hat{x})$ are spherical harmonics 
and
$h_n^{(1)}(t):=j_n(t)+iy_n(t)$ with $j_n(t)$ and $y_n(t)$, for
$t\in\R$, being the spherical Bessel functions of the first and
second kind, respectively. The following facts about these functions are useful in our estimates. 


Set $S_n=\sqrt{n(n+1)}$. Define 
\begin{equation}
\mathcal{J}_n(t):=j_n(t)+tj_n'(t),\qquad \mathcal{H}_n(t):=h_n^{(1)}(t)+t{h_n^{(1)}}'(t)
\end{equation}
where $j_n'$ and ${h_n^{(1)}}'$ are the derivatives of $j_n$ and ${h_n^{(1)}}$. 
We introduce the vector spherical harmonics
\begin{equation}\label{eq:SHV-UV}
U_n^m(\hat x):=\frac{1}{S_n}\mbox{Grad } Y_n^m(\hat x),\qquad V_n^m(\hat x):=\nu\times U_n^m
\end{equation}
where Grad denotes the surface gradient. They satisfy
\begin{equation}\hat x\times V_n^m(\hat x)=-U_n^m(\hat x),\qquad \hat x\times U_n^m(\hat x)=V_n^m(\hat x).\end{equation}
Moreover, we can rewrite $M_{n,\zeta}^m$ and $N_{n,\zeta}^m$ as 
\begin{equation}\begin{split}
M_{n,\omega}^m(x)=&~-S_n j_n(\omega|x|)V_n^m(\hat x),\\
N_{n,\omega}^m(x)=&~-S_n h_n^{(1)}(\omega|x|) V_n^m(\hat x).
\end{split}\end{equation}
Moreover, we have
\begin{equation}\begin{split}
\curl M_{n,\omega}^m(x)=&S_n|x|^{-1}\mathcal J_n(\omega|x|) U_n^m(\hat x)+S_n^2|x|^{-1}j_n(\omega|x|)Y_n^m(\hat x)\hat x,\\
\curl N_{n,\omega}^m(x)=&S_n|x|^{-1}\mathcal H_n(\omega|x|) U_n^m(\hat x)+S_n^2|x|^{-1} h_n^{(1)}(\omega|x|)Y_n^m(\hat x)\hat x.
\end{split}\end{equation}
Alternatively, we also have by \eqref{eq:SHV-UV}
\begin{equation}\begin{split}
\curl M_{n,\omega}^m(x)=&|x|^{-1}\mathcal{J}_n(\omega|x|)\grad Y_n^m(\hat x)+S_n^2|x|^{-1}j_n(\omega|x|)Y_n^m(\hat x)\hat x,\\
\curl N_{n,\omega}^m(x)=&|x|^{-1}\mathcal H_n(\omega|x|) \grad Y_n^m(\hat x)+S_n^2|x|^{-1} h_n^{(1)}(\omega|x|)Y_n^m(\hat x)\hat x.
\end{split}\end{equation}
It is easy to see that 
\begin{equation}\label{eq:bdry-MN}
\begin{split}
&\hat x\times M_{n,\omega}^m(x)=S_nj_n(\omega|x|)U_n^m(\hat x),\quad \hat x\times N_{n,\omega}^m(x)=S_nh_n^{(1)}(\omega|x|)U_n^m(\hat x)\\
&\hspace{3cm}\hat x\cdot M_{n,\omega}^m( x)=\hat x\cdot N_{n,\omega}^m(x)=0,
\end{split}\end{equation}
and
\begin{equation}\label{eq:bdry-curlMN-tang}
\left\{\begin{array}{l}
\hat x\times (\curl M_{n,\omega}^m(x))=S_n|x|^{-1}\mathcal J_n(\omega|x|) V_n^m(\hat x),\\
\hat x\times (\curl N_{n,\omega}^m(x))=S_n|x|^{-1}\mathcal H_n(\omega|x|) V_n^m(\hat x),
\end{array}\right.
\end{equation}
\begin{equation}\label{eq:bdry-curlMN-normal}
\left\{\begin{array}{l}
\hat x\cdot (\curl M_{n,\omega}^m(x))=S_n^2|x|^{-1}j_n(\omega|x|)Y_n^m(\hat x),\\
\hat x\cdot (\curl N_{n,\omega}^m(x))=S_n^2|x|^{-1}h^{(1)}_n(\omega|x|)Y_n^m(\hat x).
\end{array}\right.
\end{equation}

\bigskip

The spherical Bessel functions are given by 
\[j_n(t)=\sqrt{\pi/(2t)}J_{n+1/2}(t),\qquad y_n(t)=\sqrt{\pi/(2t)}Y_{n+1/2}(t)\]
where $J_{n+1/2}(t)$ and $Y_{n+1/2}(t)$ are the standard Bessel functions. 
More specifically, 
\[j_0(t)=\frac{\sin t}{t}, \qquad  h_0^{(1)}(t)=\frac{\sin t}{t}-i\frac{\cos t}{t}.\]
Let $\Gamma(n+1/2):=\frac{(2n-1)!!}{2^n}\sqrt\pi$. 
From their series representations, we obtain that
\begin{equation}\label{eq:asympt-jh}
j_n(t)\approx \frac{\sqrt{\pi}}{2\Gamma(n+3/2)}\left(\frac{t}{2}\right)^n,\qquad h_n^{(1)}(t)\approx-i\frac{\Gamma(n+1/2)}{2\sqrt\pi}\left(\frac{2}{t}\right)^{n+1}\qquad \mbox{ for }\;n>>t,
\end{equation}
in the following sense
\begin{equation}\label{eq:asympt-jh-1}
\left\{\begin{split}
&j_n(t)=\frac{\sqrt{\pi}}{2\Gamma(n+3/2)}\left(\frac{t}{2}\right)^n\big(1+O(1/n)\big)\\
&\qquad\qquad\mbox{ as  $n\rightarrow\infty$, {\em uniformly for $t$ on a compact subset of $\R$}},\\
&h_n^{(1)}(t)=-i\frac{\Gamma(n+1/2)}{2\sqrt\pi}\left(\frac{2}{t}\right)^{n+1}\big(1+O(1/n)\big)\\
&\qquad\qquad\mbox{ as  $n\rightarrow\infty$, {\em uniformly for $t$ on a compact subset of $(0,\infty)$}}.
\end{split}\right.
\end{equation}
and for each $n>0$, 
\begin{equation}\label{eq:asympt-jh-2}
\left\{\begin{split}
&j_n(t)=\frac{\sqrt{\pi}}{2\Gamma(n+3/2)}\left(\frac{t}{2}\right)^n\big(1+O_{t\rightarrow0}(t)\big)
,\\
&h_n^{(1)}(t)=-i\frac{\Gamma(n+1/2)}{2\sqrt\pi}\left(\frac{2}{t}\right)^{n+1}\big(1+O_{t\rightarrow0}(t)\big)
,
\end{split}\right.
\end{equation}
{\color{black}where $|O_{t\rightarrow0}(t)|\leq C(n) t$ as $t\rightarrow0$ for some constant $C(n)>0$ depending on $n$.} Due to \eqref{eq:asympt-jh-1}, one can easily obtain a uniform $C$ independent of $n$, hence replace $O_n(t)$ by $O(t)$. (Notice that such uniformity for $t$ small is corresponding graphically to the spreading out shape of $j_n(t)$ and $y_n(t)$ with respect to $n$, i.e., less oscillatory for larger $n$.)
In the same sense, we have when $n>>t$,
\begin{equation}\label{eq:asympt-JH}
\mathcal J_n(t)\approx \frac{\sqrt{\pi}(n+1)}{2\Gamma(n+3/2)}\left(\frac{t}{2}\right)^n,\qquad
\mathcal H_n(t)
\approx i\frac{\Gamma(n+1/2)n}{2\sqrt\pi}\left(\frac{2}{t}\right)^{n+1}.
\end{equation}

\medskip

\noindent
{\bf Acknowledgements.}
ML was partly supported by  Academy of Finland, grants 273979 and 284715.
TZ was supported by NSF grant DMS-1501049. 

\end{document}